\newcommand{\R}{\mathbb{R}}
\newcommand{\N}{\mathbb{N}}
\newcommand{\ra}{\rightarrow}
\newcommand{\wc}{\rightsquigarrow}
\renewcommand{\d}{\,\mathrm{d}}
\newcommand{\lp}{\left(}
\newcommand{\rp}{\right)}
\newcommand{\lc}{\left[}
\newcommand{\rc}{\right]}
\newcommand{\lacc}{\left\{}
\newcommand{\racc}{\right\}}
\newcommand{\labs}{\left|}
\newcommand{\rabs}{\right|}
\DeclareMathOperator\symdif{\triangle}
\DeclareMathOperator{\oh}{o}
\DeclareMathOperator{\Oh}{O}
\DeclareMathOperator{\EE}{\operatorname{\mathsf{E}}}
\newcommand{\PP}{\operatorname{\mathsf{P}}}
\newcommand{\1}{\mathds{1}}
\renewcommand{\mod}{\mathcal{M}}
\newcommand{\param}{\mathcal{R}}
\newcommand{\cA}{\mathcal{A}}
\newcommand{\cF}{\mathcal{F}}
\newcommand{\cS}{\mathcal{S}}
\newcommand{\hPhi}{\widehat{\Phi}}
\newcommand{\hQ}{\widehat{Q}}
\newcommand{\Wnk}{W_{n,k}}
\definecolor{darkteal}{rgb}{0, 0.35, 0.35}
\definecolor{cerulean}{rgb}{0.0, 0.48, 0.65}
\newcommand{\mdf}[1]{\textcolor{black}{#1}}
\definecolor{myblue}{rgb}{0,0,0.5}
\newcommand{\rev}[1]{\textcolor{black}{#1}}
\newcommand{\revtwo}[1]{\textcolor{black}{#1}}
\theoremstyle{plain}
\newtheorem{theorem}{Theorem}
\newtheorem*{theorem*}{Theorem}
\newtheorem{proposition}{Proposition}
\newtheorem{corollary}{Corollary}
\theoremstyle{remark}
\newtheorem{assumption}{Assumption}
\title{An asymptotic expansion \mdf{of} the empirical angular measure \mdf{for bivariate extremal dependence}}
\author[1]{Stéphane Lhaut}
\author[1]{Johan Segers}
\affil[1]{UCLouvain, LIDAM/ISBA, Voie du Roman Pays 20, 1348 Louvain-la-Neuve, Belgium.
E-mails: \href{mailto:stephane.lhaut@uclouvain.be}{stephane.lhaut@uclouvain.be} and \href{mailto:johan.segers@uclouvain.be}{johan.segers@uclouvain.be}}
\date{}
\begin{document}

\maketitle

\begin{abstract}
The angular measure on the unit sphere characterizes the first-order dependence structure of the components of a random vector in extreme regions and is defined in terms of standardized margins. Its statistical recovery is an important step in learning problems involving observations far away from the center. In the common situation that the components of the vector have different distributions, the rank transformation offers a convenient and robust way of standardizing data in order to build an empirical version of the angular measure based on the most extreme observations.
We provide a functional asymptotic expansion for the empirical angular measure in the bivariate case based on the theory of weak convergence in the space of bounded functions. From the expansion, not only can the known asymptotic distribution of the empirical angular measure be recovered, it also enables to find expansions and weak limits for other statistics based on the associated empirical process or its quantile version.
\end{abstract}

\section{\rev{Introduction and background}}
\label{sec:intro}

A standard assumption on the joint upper tail of the distribution of a random vector $X = (X_1,X_2)$ in $\R^2$ is that its distribution belongs to the \emph{maximal domain of attraction} of a multivariate extreme value distribution. This hypothesis comprises two parts:
\begin{enumerate}
	\item the marginal distributions of $X$ belong to the maximal domains of attraction of some univariate extreme value distributions;
	\item after marginal transformation of \mdf{$X_1$ and $X_2$} through the probability integral transform or a variation thereof, the joint distribution of the transformed vector belongs to the maximal domain of attraction of a multivariate extreme value distribution with pre-specified margins.
\end{enumerate}
Under the side assumption that the marginal distribution functions $F_1$ and $F_2$ of $X$ are continuous, which we make in this paper, point~2 above only involves the copula of $X$ in view of Sklar's theorem. Point~2 can be imposed on its own, independently of the assumptions on the margins in point~1, and this is what we do in this paper.

\smallskip
\noindent \textbf{Regular variation.} \quad 
\mdf{Our working hypothesis is that the distribution} of the standardized vector $(Z_1,Z_2) := (1/(1-F_1(X_{1})), 1/(1-F_2(X_{2})))$ is \emph{multivariate regularly varying}, i.e., \rev{there exists a Radon measure $\nu$ on $E := [0,+\infty]^2\setminus\{(0,0)\}$ such that
\begin{equation}
\label{eq:multivariateRV}
		s \PP \lc (Z_1,Z_2) \in s \, \cdot \, \rc \xrightarrow{v} \nu(\cdot), \qquad  \text{ when } s \ra \infty, 
\end{equation}
where $\xrightarrow{v}$ denotes \emph{vague convergence} in $E$~\cite[Section~3.4]{resnick1987}, i.e., for every $f: E \ra [0,\infty)$ which is continuous with compact support, we have $s \EE \lc f(s^{-1}(Z_1,Z_2)) \rc \to \int_E f \d\nu$ as $s \to \infty$.} The \emph{exponent measure} $\nu$ appears in the exponent of the cumulative distribution function \mdf{of} the multivariate extreme value distribution to which $(Z_1,Z_2)$ is attracted~\cite[Definition~6.1.7]{dHF2006}.

It can be shown~\cite[Proposition~3.12]{resnick1987} that the vague convergence~\eqref{eq:multivariateRV} is equivalent to the following convergence of probabilities: if $B$ is a relatively compact Borel set of $E$ with $\nu(\partial B) = 0$, then
\begin{equation}
\label{eq:multiavariateRVprobas}
	\lim_{s\ra\infty} s \PP \lc (Z_1,Z_2) \in sB \rc = \nu(B),
\end{equation}
where $sB := \{sx \in E: x \in B\}$.
Intuitively, this relation permits to describe probabilities of events far away from the \mdf{origin} under the law of the standardized random vector $(Z_1,Z_2)$ based on the knowledge of $\nu$. Consequently, statistical inference on the exponent measure $\nu$ is key to model the joint upper tail of $(X_1,X_2)$.

\smallskip
\noindent \textbf{Angular measure.} \quad
It can be shown that $\nu$ is concentrated on $[0,\infty)^2 \setminus \{(0,0)\}$ and satisfies the following homogeneity property~\cite[Thereom~6.1.9]{dHF2006}: for $B$ a Borel set in $E$,
\begin{equation}
\label{eq:homogeneity}
	\nu(cB) = c^{-1} \nu(B), \qquad \text{ for every } c>0,
\end{equation}

Property~\eqref{eq:homogeneity} motivates a transformation to pseudo-polar coordinates. Given any $L^p$-norm, $p \in [1,\infty]$, on $\R^2$, consider $T_p : (z_1,z_2) \in [0,\infty)^2 \setminus \{(0,0)\} \mapsto T_p(z_1,z_2) \in (0,\infty) \times [0,\pi/2]$ defined by $T_p(z_1,z_2) := (\|(z_1,z_2)\|_p, \arctan(z_1/z_2))$. By \eqref{eq:homogeneity}, the push-forward measure of $\nu$ under $T_p$ is a product measure given by
\begin{equation}
\label{eq:polar_nu}
	T_p \# \nu := \nu \circ T_p^{-1} = \nu_{-1} \otimes \Phi_p,
\end{equation}
where $\nu_{-1}$ is the Borel measure on $(0,\infty)$, determined by $\nu_{-1} \lp [r,\infty) \rp = r^{-1}$ for any $r>0$, and $\Phi_p$ is the \emph{angular measure} on $[0, \pi/2]$ defined by
\begin{equation}
\label{eq:defangularmeasure}
	\Phi_p(A) = \nu \big( \{ (z_1,z_2) \in [0,\infty)^2 \setminus \{(0,0)\}: \|(z_1,z_2)\|_p \geq 1, \arctan(\tfrac{z_1}{z_2}) \in A \} \big),
\end{equation}
for every Borel set $A \subseteq [0,\pi/2]$, see ~\cite[Equation~(6.14)]{resnick2006}.
The representation~\eqref{eq:polar_nu} implies that the angular measure $\Phi_p$ fully characterizes $\nu$.
Consequently, we focus on inference for $\Phi_p$.
We refer to~\cite[Chapter~5]{resnick1987} for background on multivariate extremes.

\smallskip
\noindent \textbf{Empirical angular measure.} \quad
\mdf{Given is an independent random sample $(X_{i1},X_{i2})$, for $i \in \{1,\ldots,n\}$, whose common distribution has continuous margins and angular measure $\Phi_p$.}
Our estimate for $\Phi_p$ relies on the probabilistic interpretation of its definition~\eqref{eq:defangularmeasure} obtained by~\eqref{eq:multiavariateRVprobas}, i.e., for any Borel set $A \subseteq [0,\pi/2]$,
\[
	\Phi_p(A) = \lim_{s\ra\infty} s \PP \lc \|(Z_1,Z_2)\|_p \geq s, \arctan(Z_1/Z_2) \in A \rc.
\]
Let $k = k(n) \in \{1,\ldots,n\}$ be such that $k \to \infty$ and $k = \oh(n)$ as $n \to \infty$.
Replace $s$ by $n/k$ and the induced law of $(Z_1,Z_2)$ on $E$ by its empirical counterpart on the empirically standardized sample
\begin{align*}
	(\hat{Z}_{i1},\hat{Z}_{i2}) 
	:= \lp \frac{1}{1-\hat{F}_1(X_{i1})}, \frac{1}{1-\hat{F}_2(X_{i2})} \rp
	= \lp \frac{n}{n+1-R_{i1}}, \frac{n}{n+1-R_{i2}} \rp,
\end{align*}
where $\hat{F}_j(x_j) := n^{-1} \sum_{i=1}^n \1 \{X_{ij} < x_j\}$ and where $R_{ij}$ denotes the rank of $X_{ij}$ in the marginal sample $X_{1j}, \ldots, X_{nj}$.
The \emph{empirical angular measure} of $\Phi_p(\theta) := \Phi_p \lp [0,\theta] \rp$, $\theta \in [0,\pi/2]$ is defined by
\begin{align*}
	\widehat{\Phi}_p(\theta) 
	&:= \frac{n}{k} \frac{1}{n} \sum_{i=1}^n \1 \Big\{ \|(\hat{Z}_{i1},\hat{Z}_{i2})\|_p \geq \tfrac{n}{k}, \arctan(\hat{Z}_{i1}/\hat{Z}_{i2}) \leq \theta \Big\} \\
	&= \frac{1}{k} \sum_{i=1}^n \1 \Big\{ (n+1-R_{i1})^{-p} + (n+1-R_{i2})^{-p} \geq k^{-p}, \frac{n+1-R_{i2}}{n+1-R_{i1}} \leq \tan\theta \Big\}.
\end{align*}
\rev{See~\cite{einmahl2001, einmahl2009maximum} for functional asymptotic considerations on this estimator---part of those results will be recalled below---and~\cite{clemencon2022} for finite sample results.}

\smallskip
\noindent \textbf{Asymptotic distribution theory.} \quad
The asymptotic distribution of the process
\[
	\lacc \sqrt{k}\lp \widehat{\Phi}_p(\theta) - \Phi_p(\theta) \rp: \theta \in [0,\pi/2] \racc
\]
was derived in~\cite{einmahl2001} for the case $p=\infty$ and~\cite{einmahl2009maximum} for a general $p \in [1,\infty]$.
We recall the precise statement below since it forms the backbone for our main result.

The assumptions underlying asymptotic results are best described in terms of a measure $\Lambda$, which can be derived from the exponent measure $\nu$, and defined as follows: for Borel sets $B \subseteq E^{-1} := [0,\infty]^2 \setminus \{(\infty,\infty)\}$,
\[
	\Lambda(B) := \nu \lp \{(z_1,z_2) \in E : (1/z_1,1/z_2) \in B\} \rp.
\]
It corresponds to a uniform standardization of the margins in the multivariate regular variation assumption~\eqref{eq:multivariateRV}, i.e., \rev{if $P$ denotes the law of the random vector $(1-F_1(X_1),1-F_2(X_2))$}, the measure $\Lambda$ arises as the vague limit $s P (s^{-1} \cdot) \xrightarrow{v} \Lambda(\cdot)$ on $E^{-1}$ as $s \ra \infty$.
\rev{The angular measure $\Phi_p$ can be computed in terms of $\Lambda$ using relation
\begin{equation}
\label{eq:Phi_from_Lambda}
	\Phi_p(\theta) = \Lambda(C_{p,\theta}), \qquad \theta \in [0,\pi/2]
\end{equation}
where
\begin{equation}
\label{eq:Cptheta}
	C_{p,\theta} :=
	\begin{cases}
		([0,\infty] \times \{0\}) \cup (\{\infty\} \times [0,1]) 
		&\text{if } \theta = 0, \\
		\lacc (x,y) : 0 \leq x \leq \infty, 0 \leq y \leq \min\{x \tan\theta, y_p(x)\} \racc &\text{if } 0 < \theta < \pi/2, \\
		\lacc (x,y) : 0 \leq x \leq \infty, 0 \leq y \leq y_p(x) \racc &\text{if } \theta = \pi/2,
	\end{cases}
\end{equation}
with
\[
	y_p(x) :=
	\begin{cases}
		\infty &\text{if } x \in [0,1), \\
		\lp 1 + \frac{1}{x^p-1} \rp^{1/p} &\text{if } x \in [1,\infty] \text{ and } p \in [1,\infty), \\
		1 &\text{if } x \in [1,\infty] \text{ and } p = \infty
	\end{cases}
\]
being the smallest $y \geq 1$ solution of the equation $\|(x^{-1},y^{-1})\|_p = 1$.
Note that $x\tan\theta < y_p(x)$ if and only if $x < x_p(\theta) := \|(1,\cot\theta)\|_p$ for any $p \in [1,\infty]$.
We illustrate these quantities in Figure~\ref{fig:C_p} in case $p=1$ and $0 < \theta < \pi/2$. 
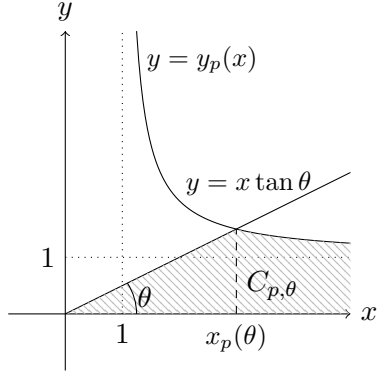
\begin{figure}
\centering
\begin{tikzpicture}[scale=0.75]
\draw[->] (-1,0) -- (5,0);
\draw (5,0) node[right]{$x$};
\draw[->] (0,-1) -- (0,5);
\draw (0,5) node[above]{$y$};
\draw[dotted] (1,0) -- (1,5);
\draw (1,0) node[below]{$1$};
\draw[dotted] (0,1) -- (5,1);
\draw (0,1) node[left]{$1$};
\draw [domain=0:5] plot(\x,{\x/2});
\draw (4.5,2.3) node[left]{\small $y=x\tan\theta$}; 
\draw [domain=1.25:5,samples=100] plot(\x,{1+1/(\x - 1)}); 
\draw (1.25,4.5) node[right]{\small $y=y_p(x)$};
\fill [pattern=north west lines, pattern color=lightgray] (0,0) -- plot [domain=0:3] (\x, \x/2) -- plot [domain=3:5] (\x,{1+1/(\x - 1)}) -- (5,0) -- cycle;
\draw (3,0.5) node[right]{$C_{p,\theta}$};
\draw[dashed] (3,0) -- (3,1.5);
\draw (3,0) node[below]{\small $x_p(\theta)$};
\draw (1.25,0) arc (0:33:1);
\draw (1.7,0.3) node[left]{$\theta$};
\end{tikzpicture}
\caption{Illustration of the set $C_{p,\theta}$ in \eqref{eq:Cptheta} and related quantities for $p=1$ and $0 < \theta < \pi/2$.}
\label{fig:C_p}
\end{figure}
}

\begin{assumption}[Smoothness]
\label{ass:smoothness}
The measure $\Lambda$ is absolutely continuous with respect to the Lebesgue measure with a density $\lambda$ which is continuous on $[0,\infty)^2 \setminus \{(0,0)\}$.
Furthermore,
$
	\Lambda(\{\infty\} \times [0,1]) = \Lambda ([0,1] \times \{\infty\}) = 0
$.
\end{assumption}

Assumption~\ref{ass:smoothness} implies that $\Phi_p$ is concentrated on $(0,\pi/2)$, thereby excluding asymptotic independence. Indeed, a calculation shows that $\Phi_p(\{0\}) = \Lambda(\{\infty\} \times [0,1]) = 0$ and $\Phi_p(\{\pi/2\}) = \Lambda ([0,1] \times \{\infty\}) = 0$.
It also implies that the map $\theta \in (0,\pi/2) \mapsto \Phi_p(\theta)$ has a continuous derivative $\varphi_p$ on $(0,\pi/2)$. In particular, $\lambda$ and $\varphi_p$ are related through the following expression: for every $x,y>0$,
\begin{equation}
\label{eq:densities_relation}
	\lambda(x,y) = \frac{xy}{x^2+y^2} \|(x,y)\|_p^{-1} \varphi_p(\arctan(y/x)).
\end{equation}
\rev{The proof of this identity, which is of independent interest, is given in Appendix~A in the supplement.}
In addition, we see that
\begin{equation}
\label{eq:lambda_homogen}
	\lambda(ax,ay) = a^{-1} \lambda(x,y), \qquad a > 0, \ (x,y) \in [0,\infty)^2 \setminus \{(0,0)\}.
\end{equation}

\begin{assumption}[Vanishing bias]
\label{ass:bias}
If \rev{$c$ denotes the density of $P$}, the quantity
\[
	\mathcal{D}_T(t) := \iint_{\mathcal{L}_T} |tc(tu_1,tu_2) - \lambda(u_1,u_2)| \, \d u_1 \d u_2 \quad 1 \leq T < \infty, \, t>0,
\]
where $\mathcal{L}_T := \{(u_1,u_2) \in [0,T]^2 : u_1 \wedge u_2 \leq 1 \}$, satisfies $\mathcal{D}_{1/t}(t) \ra 0$ as $t \ra 0$ and $\sqrt{k} \mathcal{D}_{n/k}(k/n) \ra 0$ as $n \ra \infty$. \rev{Furthermore, as $n \ra \infty$,}
\[
	\sqrt{k} \, \Phi_p(\tfrac{k}{n}) \ra 0 \qquad \text{and} \qquad
	\sqrt{k} \lp \Phi_p(\tfrac{\pi}{2}) - \Phi_p(\tfrac{\pi}{2}-\tfrac{k}{n}) \rp \ra 0.
\]
\end{assumption}

\rev{Assumption~\ref{ass:bias} will be key to control the difference between $\Phi_p(\theta) = \Lambda(C_{p,\theta})$ and its preasymptotic version $sP(s^{-1} C_{p,\theta})$ for $s>0$ large. In particular, note that the sets of interest $C_{p,\theta}$ are not included in $\mathcal{L}_T$ even for large $T$. However, thanks to some homogeneity property of $\lambda$ in~\eqref{eq:lambda_homogen}, we will be able to cover $C_{p,\theta}$ by a set on which the bias vanishes asymptotically thanks to the first part of the assumption and a tail set whose measure will be small by the second part of the assumption. Details are provided in the proof of our main result.}

\rev{Let $W_\Lambda$ be a centered Wiener process indexed by Borel sets on $E^{-1}$ with ``time'' $\Lambda$ and covariance function $\EE[W_\Lambda(C)W_\Lambda(C')] = \Lambda(C \cap C')$. Let $\ell^\infty(\mathcal{F})$ denote the Banach space of bounded functions from a set $\mathcal{F}$ to $\mathbb{R}$, equipped with the supremum norm. We are now ready to formulate Theorem~3.1 in \cite{einmahl2009maximum}. Here and below, the symbol $\wc$ denotes weak convergence in a metric space as in~\cite{VVV1996}.}
  
\begin{theorem}[Asymptotic distribution of the empirical angular measure]
\label{thm:asymptoticEAM}
Let $p \in [1,\infty]$. 
Under Assumptions~\ref{ass:smoothness} and~\ref{ass:bias}, we  have in $\ell^\infty([0,\pi/2])$ the weak convergence
$$
	\lacc \sqrt{k} \lp \widehat{\Phi}_p(\theta) - \Phi_p(\theta) \rp \racc_{\theta \in [0,\pi/2]} \wc \lacc W_\Lambda(C_{p,\theta}) + Z_p(\theta) =: \alpha_p(\theta) \racc_{\theta \in [0,\pi/2]},
$$
where the process $Z_p$ is defined by
\begin{align*}
		&Z_p(\theta) 
		:= \int_0^{x_p(\theta)} \lambda(x,x\tan \theta) \left[W_1(x) \tan \theta - W_2(x \tan \theta)\right] \, \d x \\ 
		&\, +
		\begin{dcases}
			\int_{x_p(\theta)}^\infty \lambda(x,y_p(x))  \left[W_1(x) y_p'(x) - W_2(y_p(x))\right] \, \d x  &\text{if } p < \infty, \\
			\mbox{} - W_1(1) \int_1^{1 \vee \tan \theta} \lambda(1,y) \, \d y - W_2(1) \int_{1 \vee \cot \theta}^\infty \lambda(x,1) \, \d x   &\text{if } p = \infty,
		\end{dcases}
\end{align*}
with $y_p'$ the derivative of $y_p$ and with $W_1(x) := W_\Lambda((0,x] \times (0,\infty])$ and $W_2(y) := W_\Lambda((0,\infty] \times (0,y])$ the marginal processes.
\end{theorem}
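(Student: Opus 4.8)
The plan is to realize the estimator as a set-indexed empirical tail measure and then to separate the fluctuations coming from an ``oracle'' tail empirical process (as if the margins were known) from the correction induced by the rank transformation. Writing $V_{ij} := 1 - F_j(X_{ij})$, which are uniform on $(0,1)$ with empirical counterparts $(n+1-R_{ij})/n$, I would introduce the bivariate tail empirical measure
\[
	\tilde\Lambda_n(B) := \frac{1}{k}\sum_{i=1}^n \1 \lacc \lp \tfrac{n}{k} V_{i1}, \tfrac{n}{k} V_{i2} \rp \in B \racc, \qquad B \subseteq E^{-1},
\]
and its rank-based counterpart $\hat\Lambda_n$ obtained by replacing $V_{ij}$ with $(n+1-R_{ij})/n$. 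A direct check of the two defining inequalities (the $L^p$-norm threshold and the angular threshold) against the geometry recalled in~\eqref{eq:Cptheta} shows that $\hat\Phi_p(\theta) = \hat\Lambda_n(C_{p,\theta})$, so the whole problem reduces to a functional central limit theorem for $\sqrt{k}\,(\hat\Lambda_n(C_{p,\theta}) - \Lambda(C_{p,\theta}))$, uniformly in $\theta \in [0,\pi/2]$.

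First I would establish the joint weak convergence, in $\ell^\infty$ over a suitable class of sets, of the oracle process $\sqrt{k}(\tilde\Lambda_n - \Lambda)$ together with its two marginal processes to the Gaussian process $W_\Lambda$ with covariance $\Lambda(\cdot \cap \cdot)$ and marginals $W_1, W_2$. This is the Donsker-type core of the argument: one checks that the deterministic centering $\tfrac{n}{k}\PP[(\tfrac{n}{k}V_1,\tfrac{n}{k}V_2)\in\cdot]$ converges to $\Lambda$ under Assumption~\ref{ass:smoothness}, with a bias negligible at rate $\sqrt{k}$ under Assumption~\ref{ass:bias}, and that the indexing class (the boundaries of the $C_{p,\theta}$ together with the marginal half-lines) is $\Lambda$-Donsker, i.e.\ has controlled entropy and is asymptotically tight. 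The smoothness of $\lambda$ and the explicit, monotone structure of the curves $y_p$ make the class essentially of VC type, so standard triangular-array empirical process tools apply.

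Next I would linearize the rank transformation. Passing from $\tilde\Lambda_n$ to $\hat\Lambda_n$ amounts to replacing the fixed set $C_{p,\theta}$ by a random set whose boundary is $C_{p,\theta}$ perturbed in each coordinate by the marginal empirical error, of order $1/\sqrt{k}$. Magnifying the measure of the symmetric difference by $\sqrt{k}$, and using the continuity of $\lambda$ together with the parametrization of $\partial C_{p,\theta}$ by the line $y = x\tan\theta$ on $[0,x_p(\theta)]$ and by the curve $y = y_p(x)$ beyond $x_p(\theta)$, this correction converges to the integral of $\lambda$ along $\partial C_{p,\theta}$ weighted by $W_1$ and $W_2$; carrying out the computation produces exactly $Z_p(\theta)$. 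The case $p=\infty$ is treated separately, since the boundary then contains the axis-parallel segments $x=1$ and $y=1$, on which the correction collapses to the point-evaluation terms $W_1(1)$ and $W_2(1)$ times the stated line integrals of $\lambda$.

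Finally I would combine the two pieces: the oracle fluctuation contributes $W_\Lambda(C_{p,\theta})$, the margin correction contributes $Z_p(\theta)$, and the joint convergence of the underlying Gaussian process guarantees that the sum has the single limit $\alpha_p(\theta)$ in $\ell^\infty([0,\pi/2])$, with $W_\Lambda$ and $W_1, W_2$ arising as consistent marginals of one and the same Wiener process. I expect the main obstacle to be the linearization step: one must justify the first-order expansion of the set-indexed functional \emph{uniformly} in $\theta$, controlling in particular the behaviour near $\theta = 0$ and $\theta = \pi/2$ (where part of the boundary escapes to infinity) and the interaction of the two marginal perturbations, so that the remainder is $\oh_\PP(1)$ in the supremum norm rather than merely pointwise.
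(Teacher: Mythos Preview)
Your proposal is essentially the approach taken in the paper (and in the original Einmahl et al.\ papers it cites): the paper in fact proves the stronger asymptotic expansion in Theorem~\ref{thm:expansionEAM}, from which Theorem~\ref{thm:asymptoticEAM} follows by the weak convergence of the tail empirical process and its marginals. The only organisational differences are that the paper writes $\hat\Lambda_n(C_{p,\theta}) = \tfrac{n}{k}P_n(\tfrac{k}{n}\hat C_{p,\theta})$ for an explicit random set $\hat C_{p,\theta}$ built from the marginal empirical quantile functions and then uses the three-term split $V_{n,p}+r_{n,p}+Z_{n,p}$ (stochastic, bias, random-set), and that it first reduces to $\theta\in[0,\pi/4]$ by a symmetry argument---precisely the device that handles the endpoint difficulty you anticipate near $\theta=\pi/2$.
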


\smallskip
\noindent \textbf{Outline.} \quad
\rev{In Section~\ref{sec:main}, our main result is motivated and stated together with some consequences. A general weak convergence result holding in arbitrary dimension which is key to our proof is formulated as well. Section~\ref{sec:proof} contains the proof of the main theorem. Additional proofs are to be found in the supplement.}

\section{Main results and applications}
\label{sec:main}

\rev{In many situations, an asymptotic analysis of a statistic based on a given stochastic process requires more than the limiting distribution of this process. It is often more convenient to describe its limiting behavior in terms of an asymptotic expansion.} 

\rev{A typical example comes from the fundamental goodness-of-fit problem. If one seeks to test a parametric model for the angular measure, i.e., testing if
$
	H_0: \Phi_p \in \mod = \{\Phi_{p,r}: r \in \param \subset \R^m \}
$
is a plausible hypothesis, then a standard way to proceed is to use
$
	T_n^\mod :=  \Delta (\hPhi_p, \Phi_{p,{\hat{r}_n}})
$
as a test statistic, where $\Delta$ is a kind of dissimilarity measure and $\hat{r}_n$ is an estimator of the parameter $r \in \param$, and to reject $H_0$ provided $T_n^\mod$ is ``too large''. A way to make that statement precise is to study the asymptotic distribution of $T_n^\mod$ under the null hypothesis and compare the observed statistic to a quantile of the limiting distribution. In many situations (think of Kolmogorov--Smirnov or Anderson--Darling type of statistics), the normalized test statistic $\sqrt{k} T_n^\mod$ will be expressed as a continuous functional of the empirical process $\sqrt{k}(\hPhi_p- \Phi_{p,{\hat{r}_n}})$. Therefore, an analysis to find the asymptotic distribution of $T_n^\mod$ under $H_0$ can be based on determining the asymptotic distribution of the latter process in a sufficiently rich metric space and then applying a continuous mapping theorem. To this end, note that, if $r_0 \in \param$ denotes the true value of the parameter under $H_0$, the process is naturally decomposed as
\[
	\sqrt{k} \, ( \hPhi_p- \Phi_{p,{\hat{r}_n}} ) 
	= \sqrt{k} \, ( \hPhi_p- \Phi_{p,r_0} ) 
	- \sqrt{k} \, ( \Phi_{p,{\hat{r}_n}}-\Phi_{p,r_0} ).
\]
Asymptotic considerations regarding the first part of the decomposition can be obtained on the basis of Theorem~\ref{thm:asymptoticEAM}. However, those need to be obtained \emph{jointly} with the second term to determine the asymptotic distribution of the process of interest. This is only possible if an asymptotic expansion of the first term is available, which has to be combined with assumptions on the estimator $\hat{r}_n$ for the second term.}

\rev{Theorem~\ref{thm:expansionEAM} below provides such an asymptotic expansion. In contrast to the proof of Theorem~\ref{thm:asymptoticEAM} in~\cite{einmahl2001,einmahl2009maximum}, the proof of our result does not require a Skorokhod construction and is shown on the original probability space directly. In particular, it relies on a weak convergence result, holding in arbitrary dimension, which may be of independent interest. In the following proposition, the requirement that $\cA$ is suitably measurable is to be understood as in Theorems~19.14 and~19.28 in \cite{VVV1998} and is explained in more detail in Definition~2.3.3 and Example~2.3.4 in \cite{VVV1996}.}

\begin{proposition}[Weak convergence of the tail empirical process]
\label{prop:general_weak_convergence}
Let $\cA$ be a \rev{suitably measurable Vapnik--Chervonenkis (VC)} class of Borel sets on $[0, \infty)^d$ such that there exists $M > 0$ with
\[
	A \subseteq L_M := \{x \in [0, \infty)^d: \min(x_1,\ldots,x_d) \leq M \}, \qquad A \in \cA.
\]
Let $P$ be a Borel probability measure on $[0,1]^d$ with uniform margins such that there exists a Radon measure $\Lambda$ on $[0,\infty]^d \setminus \{(\infty, \ldots, \infty)\}$ satisfying
\[
	\lim_{t \ra 0} \labs t^{-1} P(tB) - \Lambda(B) \rabs = 0,
\]
for any Borel set bounded away from infinity with $\Lambda(\partial B) = 0$, and
\begin{equation}
\label{eq:uniform_rv}
	\lim_{t \ra 0} \sup_{A, A' \in \cA} \labs t^{-1} P(t(A \symdif A')) - \Lambda(A \symdif A') \rabs = 0,
\end{equation}
Let $P_n$ be the empirical measure of an i.i.d. random sample of size $n \in \N$ from $P$. Let $k=k(n) \in \N$ be such that $k \ra \infty$ and $k = \oh(n)$ as $n \ra \infty$.
Then, in $\ell^\infty(\cA)$,
\begin{equation}
\label{eq:weak_convergence_underlying}
	\lp 
	\Wnk(A) :=
		\sqrt{k} \lacc \tfrac{n}{k} P_n \lp \tfrac{k}{n} A \rp
		- \tfrac{n}{k} P \lp \tfrac{k}{n} A \rp \racc 
	\rp_{A \in \cA}
	\wc \lp W_\Lambda(A) \rp_{A \in \cA},
\end{equation}
where $W_\Lambda$ is tight, centred Gaussian process on $\cA$ with covariance function $\EE[W_\Lambda(A) \, W_\Lambda(A')] = \Lambda(A \cap A')$.
Moreover, the process $\Wnk$ is asymptotically equicontinuous in probability with respect to the semi-metric $\rho(A,A') := \Lambda(A \symdif A')$, i.e., for all $\epsilon > 0$,
\begin{equation}
\label{eq:asy_equi}
	\lim_{\delta \downarrow 0} \limsup_{n \ra \infty} \PP^* \lc \sup_{A, A' \in \cA, \rho(A,A') < \delta} \labs \Wnk(A) - \Wnk(A') \rabs > \epsilon \rc = 0.
\end{equation}
\end{proposition}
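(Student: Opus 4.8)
The plan is to recognize $\Wnk$ as a rescaled, centred empirical process of a triangular array of indicator functions and then to invoke a functional central limit theorem for triangular arrays, e.g.\ \cite[Theorem~2.11.1]{VVV1996}. Writing $U_1, \ldots, U_n$ for the i.i.d.\ sample from $P$, I first note that
\[
	\Wnk(A) = \frac{1}{\sqrt{k}} \sum_{i=1}^n \lp \1\{U_i \in \tfrac{k}{n} A\} - P(\tfrac{k}{n}A) \rp,
\]
so $\Wnk$ is the empirical process indexed by the array of classes $\cF_n := \{ \1_{(k/n)A} : A \in \cA \}$, normalised by $\sqrt{n/k}$, with common envelope $F_n := \1_{(k/n) L_M}$. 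Since every $A \in \cA$ lies in $L_M$ and the closure of $L_M$ in $[0,\infty]^d$ avoids $(\infty, \ldots, \infty)$, we have $\Lambda(L_M) < \infty$ and $\tfrac{n}{k} P(\tfrac{k}{n} L_M) \to \Lambda(L_M)$, so in particular $\tfrac{n}{k} P(\tfrac{k}{n} A)$ is bounded uniformly in $A \in \cA$. The task then reduces to verifying the hypotheses of the triangular-array FCLT: a Lindeberg condition, convergence of the covariances (to identify the limit), uniform convergence of the variance semi-metric, and a uniform-entropy condition, plus measurability.

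The covariance and variance structure is controlled directly by the regular-variation hypotheses. Using $\1_{(k/n)A}\,\1_{(k/n)A'} = \1_{(k/n)(A\cap A')}$ and $\tfrac{k}{n}\to 0$ together with the boundedness just noted,
\[
	\Cov\lp \Wnk(A), \Wnk(A') \rp
	= \tfrac{n}{k} P\lp \tfrac{k}{n}(A \cap A') \rp
	- \tfrac{k}{n} \Big( \tfrac{n}{k} P(\tfrac{k}{n} A) \Big)\Big( \tfrac{n}{k} P(\tfrac{k}{n} A') \Big)
	\lra \Lambda(A \cap A'),
\]
where the first term is handled through the identity $\1_{A}\1_{A'} = \tfrac12(\1_A + \1_{A'} - \1_{A \symdif A'})$, reducing it to the convergence of $\tfrac{n}{k}P(\tfrac{k}{n}B)$ for $B \in \{A, A', A\symdif A'\}$ furnished by the pointwise limit (at $\Lambda$-continuity sets) and by \eqref{eq:uniform_rv}. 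For the variance semi-metric, a short computation on disjoint pieces gives $\sum_{i=1}^n \Var\big( \tfrac{1}{\sqrt{k}}(\1\{U_i\in\tfrac{k}{n}A\} - \1\{U_i \in \tfrac{k}{n}A'\})\big) = \tfrac{n}{k}P(\tfrac{k}{n}(A\symdif A')) + \oh(1)$, uniformly in $A, A'$, and by \eqref{eq:uniform_rv} this converges uniformly to $\rho(A,A') = \Lambda(A \symdif A')$. This is precisely the step where the \emph{uniform} regular-variation hypothesis, rather than mere pointwise convergence, is indispensable.

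The Lindeberg condition is immediate since the centred summands are bounded by $1/\sqrt{k}$, so the truncated second moments vanish identically once $k > \eta^{-2}$. For the entropy condition, $\cA$ is a VC class, and neither the dilation $A \mapsto \tfrac{k}{n}A$ nor passage to indicators alters the VC index; hence the uniform entropy numbers satisfy $\sup_Q \log N(\varepsilon \|F_n\|_{Q,2}, \cF_n, L_2(Q)) \lesssim V \log(1/\varepsilon)$ with $V$ independent of $n$, and $\int_0^{\delta_n} \sqrt{V \log(1/\varepsilon)}\,\d\varepsilon \to 0$ as $\delta_n \downarrow 0$. Measurability causes no difficulty since $\cA$ is VC and may be taken pointwise measurable, so outer expectations reduce to expectations. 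With all conditions met and the covariances converging to $\Lambda(\cdot \cap \cdot)$, the triangular-array FCLT yields asymptotic tightness together with convergence of the finite-dimensional distributions, hence \eqref{eq:weak_convergence_underlying}; the asymptotic equicontinuity \eqref{eq:asy_equi} is exactly the equicontinuity delivered by this tightness, noting that $\rho$ and its square root (the intrinsic semi-metric of $W_\Lambda$) induce the same topology so that equicontinuity in one is equivalent to equicontinuity in the other.

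The main obstacle is the tightness/equicontinuity: because $\cF_n$ is a genuine triangular array whose indexing functions shrink with $n$ (their $L_2(P)$-norms vanish), no single Donsker class is available and the ordinary Donsker theorem does not apply. One must instead control the entropy \emph{uniformly in} $n$ and match the $n$-dependent variance semi-metric to its limit. Both are achieved above — the VC property supplies the uniform entropy integral, and \eqref{eq:uniform_rv} supplies the uniform convergence of the variance semi-metric — so the triangular-array FCLT applies and the proof goes through.
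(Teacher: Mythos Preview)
Your proposal is correct and follows essentially the same route as the paper: both recast $\Wnk$ as a triangular-array empirical process indexed by $\cF_n = \{\1_{(k/n)A} : A \in \cA\}$ and verify the hypotheses of a changing-classes FCLT (you cite \cite[Theorem~2.11.1]{VVV1996}, the paper cites the closely related Theorem~19.28 in \cite{VVV1998}), checking the Lindeberg condition on the envelope $\1_{(k/n)L_M}$, the uniform-entropy bound via the VC property, and the convergence of the $L_2(P)$ semi-metric to $\rho$ via \eqref{eq:uniform_rv}; the final passage from equicontinuity in $\rho_2 = \sqrt{\rho}$ to equicontinuity in $\rho$ is also identical.
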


\rev{The proof of Proposition~\ref{prop:general_weak_convergence} is deferred to Appendix~B in the supplement. It relies essentially on an application of Theorem~19.28 in~\cite{VVV1998}, which is also recalled.}

\rev{Before stating our main result, we introduce and recall some notation. Let $U := (1-F_1(X_1),1-F_2(X_2))$ be the uniform version of $X$ and let $P$ denote its law. Let $P_n$ denote the empirical measure of an independent sample $U_1,\ldots,U_n$ of $P$. For $j \in \{1,2\}$ and $u \in [0,1]$, let the associated marginal empirical cumulative distribution functions be denoted by 
$
	\Gamma_{jn}(u) := \frac{1}{n} \sum_{i=1}^n \1\{U_{ij} \leq u\}.
$
We set $\Gamma_{jn}(u) := u$ for $u > 1$. 
Let $Q_{jn}$ denote the left-continuous inverse of $\Gamma_{jn}$, that is, for $j \in \{1,2\}$ and $y\geq 0$,
$
	Q_{jn}(y) := \inf \lacc u \in [0,\infty): \Gamma_{jn}(u) \geq y \racc.
$
In particular, for $y > 1$, we have $Q_{jn}(y) = y$. We set $Q_{jn}(y) := 0$ for $0 \leq y \leq (2n)^{-1}$, by convention.
We define the tail marginal empirical processes
$w_{jn}(x) := \sqrt{k} (\tfrac{n}{k} \Gamma_{jn} (\tfrac{k}{n} x) - x)$ and $v_{jn}(x) := \sqrt{k} (\tfrac{n}{k} Q_{jn} (\tfrac{k}{n} x) - x)$ for $j \in \{1,2\}$ and $x\geq 0$.}

\rev{The asymptotic behavior of those processes can be understood using Proposition~\ref{prop:general_weak_convergence}. For example, it is nearly immediate that under Assumptions~\ref{ass:smoothness} and~\ref{ass:bias}, the weak convergence relations
\begin{equation}
\label{eq:marginal_weak_convergences}
	\lacc w_{jn}(x) : x \in [0,M] \racc \wc \lacc W_j(x) : x \in [0,M] \racc, \qquad j \in \{1,2\},
\end{equation}
hold in $\ell^\infty([0,M])$ for $M>1$. By the functional delta method, we also have
\begin{equation}
\label{eq:quantile_weak_convergences}
	\sup_{x \in [0,M]} \labs v_{jn}(x) + w_{jn}(x) \rabs = \oh_{\PP}(1), \qquad j \in \{1,2\},
\end{equation}
meaning that the limiting behavior of the quantile processes is also a consequence of our result. Useful is to have asymptotic results also for the weighted versions of these processes. In particular, we will need the following weak convergence result whose proof can be found in Appendix~C in the supplement. It also relies on an application of Theorem~19.28 in~\cite{VVV1998}.
\begin{proposition}[Weighted weak convergence of tail marginal empirical processes]
\label{prop:weighted}
Let $U_1,\ldots,U_n$ denote uniformly distributed random variables on $[0,1]$. Let
\[
	\Gamma_n(u) :=
	\begin{cases}
		\frac{1}{n} \sum_{i=1}^n \1\{U_i \leq u\} \qquad &\text{if } u \in [0,1] \\
		u \qquad &\text{if } u > 1
	\end{cases}
\]
denote the associated (modified) empirical cumulative distribution function and let
\[
	Q_n(u) := 
	\begin{cases}
		\inf \lacc x \geq 0: \Gamma_n(x) \geq u \racc
		&\text{if $u > (2n)^{-1}$,} \\
		0 & \text{if $0 \le u \le (2n)^{-1}$.}
	\end{cases}
\]
be the (modified) left-continuous inverse of $\Gamma_n$. For $x \geq 0$, consider the processes
\[
	w_n(x) := \sqrt{k} \lp \tfrac{n}{k} \Gamma_n ( \tfrac{k}{n} x ) - x \rp \qquad \text{and} \qquad
	v_n(x) := \sqrt{k} \lp \tfrac{n}{k} Q_n ( \tfrac{k}{n} x ) - x \rp.
\]
Let $w: x \in (0,\infty) \mapsto w(x) := 1/(x^\delta \vee x^\eta)$ with $0 < \delta < 1/2 < \eta$ and set $w(0):= 0$. 
In $\ell^\infty([0,\infty))$, we have
\begin{equation} 
	\label{eq:w_jn_poids}
	\lacc w_n(x)w(x): x \in [0,\infty) \racc \wc \lacc W_j(x)w(x): x \in [0,\infty) \racc,
\end{equation}
as $n \to \infty$. Furthermore, for any $M > 1$,
\begin{equation}
	\label{eq:quantile_weak_convergences_weighted}
	\sup_{x \in [0,M]} \frac{\labs v_{n}(x) + w_{n}(x) \rabs}{x^\delta} = \oh_{\PP}(1).
\end{equation}
\end{proposition}
}

\begin{theorem}[Asymptotic expansion of the empirical angular measure]
\label{thm:expansionEAM}
Under Assumptions~\ref{ass:smoothness} and~\ref{ass:bias}, we have for any $p \in [1,\infty]$
\[
	\sup_{\theta \in [0,\pi/2]} \labs \sqrt{k}\lp \hPhi_p(\theta) - \Phi_p(\theta) \rp - E_{n,p}(\theta) \rabs = \oh_{\PP}(1),
\]
as $n \to \infty$, where
\begin{align*}
	\lefteqn{
	E_{n,p}(\theta) := \sqrt{k} \lp \tfrac{n}{k} P_n \lp \tfrac{k}{n} C_{p,\theta} \rp - \tfrac{n}{k} P\lp \tfrac{k}{n} C_{p,\theta} \rp \rp 
	} \\
	&\quad + \int_0^{x_p(\theta)} \lambda(x,x\tan\theta) \lacc w_{1n}(x) \tan\theta - w_{2n} \lp x \tan\theta \rp \racc \d x \\
	&\quad + 
	\begin{dcases}
		\int_{x_p(\theta)}^\infty \lambda(x,y_p(x)) \lacc y_p'(x) w_{1n}(x) - w_{2n} \lp y_p(x) \rp \racc \d x & \text{ if } p < \infty, \\
		\mbox{} - w_{1n}(1) \int_1^{1 \vee \tan\theta} \lambda(1,y) \d y - w_{2n}(1) \int_{1 \vee \cot\theta}^\infty \lambda(x,1) \d x & \text{ if } p = \infty.
	\end{dcases}
\end{align*}
\end{theorem}

Note the similarity with Theorem~\ref{thm:asymptoticEAM}. In fact, Theorem~\ref{thm:asymptoticEAM} is a consequence of Theorem~\ref{thm:expansionEAM}.
In the space $\ell^\infty([0,\pi/2]) \times \ell^\infty([0,\infty))^2$, the joint weak convergence
\begin{multline*}
	\lp \sqrt{k} \{ \tfrac{n}{k} P_n (\tfrac{k}{n} C_{p,\cdot}) - \tfrac{n}{k} P(\tfrac{k}{n} C_{p,\cdot}) \}, w_{1n}(\cdot)w(\cdot), w_{2n}(\cdot)w(\cdot) \rp \\
	\wc \lp W_\Lambda(C_{p,\cdot}), W_1(\cdot)w(\cdot), W_2(\cdot)w(\cdot) \rp,
\end{multline*}
where $w$ denotes the same weight function as in Proposition~\ref{prop:weighted}, can be obtained from Propositions~\ref{prop:general_weak_convergence} and~\ref{prop:weighted} as explained in the proof of Theorem~\ref{thm:expansionEAM}. Writing $w_{jn} = w_{jn} w \frac{1}{w}$ and noting that the integrals $\int_0^{x_p(\theta)} \lambda(x, x \tan \theta) \, \frac{1}{w(x)} \d x$ and so on are finite, the continuous mapping theorem then yields $E_{n,p} \wc \alpha_p$ in $\ell^\infty([0,\pi/2])$, as stated in Theorem~\ref{thm:asymptoticEAM}.

\medskip
\noindent \textbf{The quantile process of the angular probability measure.} \quad
\rev{In the goodness-of-fit problem mentioned in the beginning of this section, many statistics do not involve the original process directly, but rather its quantile version. For example, one may think of a procedure based on Wasserstein distances~\cite{bobkov2019one} which admits an explicit representation in terms of quantiles for measures supported on a one-dimensional space. To find the limit distribution of such a test statistic, one needs an expansion of the quantile process. In our case, since the total mass of $\Phi_p$ is unknown, it is convenient to consider the \emph{angular probability measure} and its empirical counterpart, with cumulative distributions functions $Q_p(\theta) := \Phi_p(\theta) / \Phi_p(\pi/2)$ and $\hQ_p(\theta) := \widehat{\Phi}_p(\theta) / \widehat{\Phi}_p(\pi/2)$, respectively, for $\theta \in [0,\pi/2]$. The asymptotic expansion of $\hQ_p$ follows from the one of $\widehat{\Phi}_p$ in Theorem~\ref{thm:expansionEAM} and Slutsky's lemma. The proof of the Corollary~\ref{cor:eapm} is standard and is omitted for brevity. In Corollary~\ref{cor:quantile}, we analyze the associated quantile function estimate $\hQ_p^{-1}$; a detailed proof is to be found in Appendix~D in the supplement.}

\begin{corollary}[Asymptotic expansion of empirical angular probability measure]
\label{cor:eapm}
Under the same assumptions as Theorem~\ref{thm:expansionEAM}, we have, as $n \ra \infty$,
\[
	\sup_{\theta \in [0,\pi/2]} \labs \sqrt{k} \lp \hQ_p(\theta)) - Q_p(\theta) \rp - \frac{E_{n,p}(\theta) \Phi_p(\tfrac{\pi}{2}) - \Phi_p(\theta) E_{n,p}(\tfrac{\pi}{2})}{\Phi_p(\tfrac{\pi}{2})^2} \rabs = \oh_{\PP}(1).
\]
\end{corollary}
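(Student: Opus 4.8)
The plan is to treat $\hQ_p$ as a ratio functional of the pair $(\hPhi_p(\theta), \hPhi_p(\pi/2))$ and to linearize it, with the uniformity in $\theta$ inherited from the uniform expansion of $\hPhi_p$ in Theorem~\ref{thm:expansionEAM} together with the fact that the normalizing constant $\hPhi_p(\pi/2)$ does not depend on $\theta$. This is simply the functional (uniform) version of the delta method for the map $(a,b) \mapsto a/b$, combined with Slutsky's lemma.

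First I would record the three consequences of the preceding results that drive the argument. Write $\beta := \Phi_p(\pi/2)$; positivity $\beta > 0$ holds because Assumption~\ref{ass:smoothness} forces $\Phi_p$ to be concentrated on $(0,\pi/2)$ with positive total mass, making the ratios well defined. Theorem~\ref{thm:expansionEAM} gives the uniform expansion $\sup_{\theta} \labs \sqrt{k}(\hPhi_p(\theta) - \Phi_p(\theta)) - E_{n,p}(\theta) \rabs = \oh_{\PP}(1)$, and combining it with the weak convergence of Theorem~\ref{thm:asymptoticEAM} in $\ell^\infty([0,\pi/2])$ also yields $\sup_{\theta} \labs E_{n,p}(\theta) \rabs = \Oh_{\PP}(1)$. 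Specializing the expansion to $\theta = \pi/2$ gives, with $\hat\beta := \hPhi_p(\pi/2)$, that $\hat\beta = \beta + \Oh_{\PP}(1/\sqrt{k}) = \beta + \oh_{\PP}(1)$, so $\hat\beta \to \beta$ in probability.

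Next I would carry out the algebraic decomposition of the ratio and linearize. One has
\[
	\sqrt{k}\lp \hQ_p(\theta) - Q_p(\theta) \rp
	= \frac{\sqrt{k}(\hPhi_p(\theta) - \Phi_p(\theta))\,\beta - \Phi_p(\theta)\,\sqrt{k}(\hat\beta - \beta)}{\hat\beta \, \beta}.
\]
By the uniform expansion and the boundedness of $\theta \mapsto \Phi_p(\theta)$ on $[0,\pi/2]$, the numerator equals $E_{n,p}(\theta)\,\beta - \Phi_p(\theta)\,E_{n,p}(\pi/2) + \oh_{\PP}(1)$ uniformly in $\theta$. It then remains to replace $1/(\hat\beta \beta)$ by $1/\beta^2$: since $\labs 1/(\hat\beta\beta) - 1/\beta^2 \rabs = \labs \beta - \hat\beta \rabs/(\hat\beta\beta^2) = \oh_{\PP}(1)$ and the numerator is $\Oh_{\PP}(1)$ uniformly in $\theta$ (using $\sup_\theta \labs E_{n,p}(\theta) \rabs = \Oh_{\PP}(1)$ and $\sup_\theta \labs \Phi_p(\theta) \rabs \leq \beta$), the resulting error is $\oh_{\PP}(1)$ uniformly in $\theta$. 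This produces exactly the claimed expansion with denominator $\Phi_p(\pi/2)^2$.

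There is no genuine analytic obstacle; the difficulty is purely in the bookkeeping. The two points requiring care are verifying $\beta > 0$ so that the linearization and the $1/\hat\beta \to 1/\beta$ replacement are legitimate, and ensuring that every $\oh_{\PP}$ and $\Oh_{\PP}$ bound is uniform in $\theta$. The latter is automatic here because $\hat\beta$ and $\beta$ are constant in $\theta$, so the only $\theta$-dependence sits in the numerator, which is controlled uniformly by Theorem~\ref{thm:expansionEAM} and Theorem~\ref{thm:asymptoticEAM}.
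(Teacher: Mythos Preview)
Your proposal is correct and follows essentially the same approach as the paper: the paper states that the corollary is a standard consequence of Theorem~\ref{thm:expansionEAM} and Slutsky's lemma, and its (omitted) proof uses exactly the algebraic decomposition $\sqrt{k}(\hQ_p(\theta)-Q_p(\theta)) = \bigl[\sqrt{k}(\hPhi_p(\theta)-\Phi_p(\theta))\Phi_p(\pi/2) - \Phi_p(\theta)\sqrt{k}(\hPhi_p(\pi/2)-\Phi_p(\pi/2))\bigr]/(\hPhi_p(\pi/2)\Phi_p(\pi/2))$ that you wrote down, followed by the same two replacements.
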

\begin{corollary}[Asymptotic expansion of empirical angular quantile process]
\label{cor:quantile}
Under the same assumptions as Theorem~\ref{thm:expansionEAM} and the additional requirement that $\varphi_p > 0$ on $(0,\pi/2)$, we have for any $[v,w] \subset (0,1)$, as $n \ra \infty$,
\begin{multline*}
	\sup_{u \in [v,w]} \Biggl| \sqrt{k} \lp \hQ_p^{-1}(u) - Q_p^{-1}(u) \rp
	- \frac{\Phi_p( Q_p^{-1}(u)) \, E_{n,p}(\tfrac{\pi}{2}) - E_{n,p}( Q_p^{-1}(u) ) \, \Phi_p(\tfrac{\pi}{2})}{\varphi_p(Q_p^{-1}(u)) \, \Phi_p(\tfrac{\pi}{2})} \Biggr| 
	\\
	= \oh_{\PP}(1).
\end{multline*}
\end{corollary}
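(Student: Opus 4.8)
The plan is to obtain the expansion as a direct consequence of the Functional Delta Method~\cite[Theorem~20.8]{VVV1998} applied to the generalized inverse (quantile) map $\phi \colon G \mapsto G^{-1}$, composed with the expansion of $\sqrt{k}(\hQ_p - Q_p)$ furnished by the previous corollary. Concretely, writing
$$
	h_n(\theta) := \frac{E_{n,p}(\theta) \Phi_p(\tfrac{\pi}{2}) - \Phi_p(\theta) E_{n,p}(\tfrac{\pi}{2})}{\Phi_p(\tfrac{\pi}{2})^2},
$$
that corollary states that $\sqrt{k}(\hQ_p - Q_p) = h_n + \oh_{\PP}(1)$ in $\ell^\infty([0,\pi/2])$, and Theorem~\ref{thm:asymptoticEAM} guarantees that this leading term converges weakly to a tight, a.s.\ continuous Gaussian limit. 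Both $\hQ_p$ and $Q_p$ are genuine distribution functions on $[0,\pi/2]$ running from $0$ to $1$ (each has total mass one by construction), so that $\hQ_p^{-1} = \phi(\hQ_p)$ and $Q_p^{-1} = \phi(Q_p)$ are well defined.

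Next I would invoke the Hadamard differentiability of the inverse map~\cite[Lemma~21.3]{VVV1998}. Here the side assumption $\varphi_p > 0$ on $(0,\pi/2)$ is essential: it ensures that $Q_p$ is strictly increasing with continuous, strictly positive density $q_p = \varphi_p / \Phi_p(\tfrac{\pi}{2})$, which is precisely the hypothesis needed for $\phi$ to be Hadamard differentiable at $Q_p$ tangentially to the continuous functions, with the linear derivative
$$
	\phi'_{Q_p}(h)(u) = - \frac{h(Q_p^{-1}(u))}{q_p(Q_p^{-1}(u))}.
$$
Since this derivative extends to a continuous linear map on all of $\ell^\infty$, the expansion form of~\cite[Theorem~20.8]{VVV1998} gives $\sqrt{k}(\hQ_p^{-1} - Q_p^{-1}) = \phi'_{Q_p}(\sqrt{k}(\hQ_p - Q_p)) + \oh_{\PP}(1)$; feeding in $\sqrt{k}(\hQ_p - Q_p) = h_n + \oh_{\PP}(1)$ and using linearity and continuity of $\phi'_{Q_p}$ replaces the argument by $h_n$. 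Substituting $q_p = \varphi_p/\Phi_p(\tfrac{\pi}{2})$ and the definition of $h_n$ into $\phi'_{Q_p}(h_n)(u) = -h_n(Q_p^{-1}(u))/q_p(Q_p^{-1}(u))$ and simplifying then yields exactly the stated expression.

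The main obstacle is the uniformity over the \emph{full} open interval $(0,1)$. The derivative $\phi'_{Q_p}$ is bounded, and hence continuous on all of $\ell^\infty$, only when $q_p \circ Q_p^{-1}$ is bounded away from zero; under the bare positivity assumption this holds on every compact subinterval $[u_0,u_1] \subset (0,1)$ but may fail as $u \to 0$ or $u \to 1$, where $Q_p^{-1}(u) \to 0$ or $\pi/2$ and $\varphi_p$ may tend to zero. On $[u_0,u_1]$ the argument above applies verbatim. To close the gap near the endpoints I would show that the contribution there is asymptotically negligible, exploiting that $\Phi_p(0) = 0$ together with the weighted tail-empirical-process bounds~\eqref{eq:w_jn_poids} and~\eqref{eq:quantile_weak_convergences_weighted} already established, which quantify the rate at which the underlying processes vanish at the boundary; letting $u_0 \downarrow 0$ and $u_1 \uparrow 1$ after $n \to \infty$ then upgrades the compact-interval statement to the supremum over all $u \in (0,1)$.
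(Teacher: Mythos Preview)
Your approach is exactly the paper's: apply the Functional Delta Method \cite[Theorem~20.8]{VVV1998} to the quantile map via its Hadamard differentiability (the paper cites Lemma~21.4 rather than~21.3), then feed in the expansion of $\sqrt{k}(\hQ_p-Q_p)$ from the preceding corollary and simplify.

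The only divergence is your final paragraph. The paper performs no separate boundary analysis: it places $Q_p$ in a domain $\mathbb{D}_\psi$ consisting of distribution functions on $[0,\pi/2]$ that are continuously differentiable with strictly positive derivative, so that by compactness $q_p$ is bounded away from zero and $\psi'_{Q_p}(\gamma)=-(\gamma/q_p)\circ Q_p^{-1}$ is a bounded linear operator on all of $\ell^\infty([0,\pi/2])$; Theorem~20.8 then delivers the expansion uniformly over $(0,1)$ in one stroke. Your concern that $\varphi_p$ might vanish at the endpoints is a fair reading of the literal open-interval hypothesis, but the paper sidesteps it by treating the side assumption as putting $Q_p$ in $\mathbb{D}_\psi$; the weighted-process endpoint argument you sketch is therefore extra work the paper does not carry out (and, as written, remains only a plan rather than a proof).
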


\section{Proof of Theorem~\ref{thm:expansionEAM}}
\label{sec:proof}

The key idea behind the proof is the following decomposition that can be found in~\cite[Equation~12]{einmahl2001} or~\cite[Equation~3.4]{einmahl2009maximum}: for each $\theta \in [0,\pi/2]$,
\begin{align}
\nonumber
	\sqrt{k} \lp \hPhi_p(\theta) - \Phi_p(\theta) \rp
	&= \sqrt{k} \lp \tfrac{n}{k} P_n \lp \tfrac{k}{n} \hat{C}_{p,\theta} \rp - \tfrac{n}{k} P\lp \tfrac{k}{n} \hat{C}_{p,\theta} \rp \rp \\
\nonumber
	&\qquad + \sqrt{k} \lp \tfrac{n}{k} P \lp \tfrac{k}{n} \hat{C}_{p,\theta} \rp - \Lambda \lp \hat{C}_{p,\theta} \rp \rp \\
\nonumber
	&\qquad + \sqrt{k} \lp \Lambda \lp \hat{C}_{p,\theta} \rp - \Lambda \lp C_{p,\theta} \rp \rp \\
\label{eq:threeterms}
	&=: V_{n,p}(\theta) + r_{n,p}(\theta) + Z_{n,p}(\theta),
\end{align}
where the set $C_{p,\theta}$ is estimated by
\begin{align}
	\hat{C}_{p,\theta} := \Bigl\{ (x,y) : 0 \leq x \leq \infty, 0 \leq &y \leq \tfrac{n}{k} Q_{2n} \lp \Gamma_{1n} \lp \tfrac{k}{n}x \rp \tan\theta \rp, \nonumber \\
	&y \leq \tfrac{n}{k} Q_{2n} \lp \tfrac{k}{n} y_p \lp \tfrac{n}{k} \Gamma_{1n} \lp \tfrac{k}{n}x \rp \rp \rp \Bigr\} \label{eq:def_hat_sets}.
\end{align}
The three terms in the decomposition~\eqref{eq:threeterms} will be called the \emph{stochastic term}, \emph{bias term}, and \emph{random set term}, respectively.

\smallskip
\noindent \rev{\textbf{Outline of the proof.} \quad
In Section~\ref{sec:weak}, we state the main underlying weak convergence result needed for the proof, based on Proposition~\ref{prop:general_weak_convergence}. Unlike~\cite{einmahl2001,einmahl2009maximum}, we prove our asymptotic expansion without requiring a Skorokhod construction but work with the original sample space and random variables directly. Technically, it implies that we have to work with weak convergence instead of the almost sure representations arising from the Skorokhod construction, which makes some continuity arguments a bit more delicate, but the proof becomes more direct.
In Section~\ref{sec:eachterm}, we treat each term of~\eqref{eq:threeterms} separately. 
In Appendix E of the supplement, it is argued that we can reduce the analysis to the case $\theta \in [0, \pi/4]$, by symmetry of the problem. Hence, in the following, we only focus on that region.}

\subsection{Weak convergence of a tail empirical process}
\label{sec:weak}

\rev{As in \cite{einmahl2009maximum}, the main idea of the proof is to approximate the random sets $\hat{C}_{p,\theta}$ by those in a carefully constructed VC class of sets built from a covering of the positive real line by (small) intervals on which the random perturbations of the original sets $C_{p,\theta}$ are manageable.
Fix any $\Delta \in \{1, 1/2, 1/3, \ldots \}$ and $M > 1$ and let $\cA = \cA(\Delta,M)$ be the following VC class of sets, already introduced in~\cite{einmahl2009maximum}. For $m \in \{0, 1, 2, \ldots, 1/\Delta - 1\}$, define the intervals
\begin{equation}
\label{eq:I_intervals}
	I_\Delta(m,\theta) :=
	\begin{cases}
	[m\Delta x_p(\theta), (m+1)\Delta x_p(\theta)] \qquad &\text{if } \theta \in (0,\pi/4], \\
	[0,\infty) \qquad &\text{if } \theta = 0 \text{ and } m = 0, \\
	\emptyset \qquad &\text{if } \theta = 0 \text{ and } m > 0.
	\end{cases}
\end{equation}
and
\begin{equation}
\label{eq:J_intervals}
	J_\Delta(m) := [y_p(1 + (2^{1/p} - 1)(m+1) \Delta), y_p(1 + (2^{1/p} - 1)m \Delta)].
\end{equation}
Set $\tilde{\cA}$ to be the class of sets containing the following sets:
\begin{itemize}
	\item $\bigcup_{m=0}^{1/\Delta-1} \lacc (x,y): x \in I_\Delta(m,\theta), 0\leq y \leq x\tan\theta + B_m(x\tan\theta)^{1/16} \racc$ for some $\theta \in [0,\pi/4]$ and $B_0,\ldots,B_{1/\Delta-1} \in [-1,1]$.
	\item $\bigcup_{m=0}^{1/\Delta-1} \lacc (x,y): x \in J_\Delta(m), x \geq x_p(\theta), 0\leq y \leq y_p(x)(1 + K_m) \racc$ for some $\theta \in [0,\pi/4]$ and $K_0,\ldots,K_{1/\Delta-1} \in [-1/2,1/2]$.
	\item $\{(x,y): x \leq a\}$, $\{(x,y): y \leq a\}$ and $\{(x,y): x \leq a \text{ or }y \leq a\}$ for some $a \in [0,M]$.
\end{itemize}
Next, define $\tilde{\cA}_s := \{A_s: A \in \tilde{\cA}\}$ where for $A \in \tilde{\cA}$, $A_s := \{(x,y): (y,x) \in A\}$. Finally, let $\cA = \tilde{\cA} \cup \tilde{\cA}_s$.}

\rev{As required in Proposition~\ref{prop:general_weak_convergence}, every set in $\cA$ is contained in $L_M = \{x \in [0, \infty)^d: \min(x_1,\ldots,x_d) \leq M \}$, provided $M>1$ is picked large enough---which is not restrictive since we will consider it large later. Furthermore, the other conditions underlying Proposition~\ref{prop:general_weak_convergence} are easily shown to hold in view of the assumptions. In particular, the uniform regular variation condition~\eqref{eq:uniform_rv} is satisfied thanks to the convergence in total variation in Assumption~\ref{ass:bias}. Consequently, we can conclude that \eqref{eq:weak_convergence_underlying} and~\eqref{eq:asy_equi} hold in this setting.}

\subsection{Asymptotic expansions of the three terms in \eqref{eq:threeterms}}
\label{sec:eachterm}

\noindent \textbf{The stochastic term $V_{n,p}(\theta)$.} \quad
Recall $\Wnk$ in Proposition~\ref{prop:general_weak_convergence} and note that $V_{n,p}(\theta) = \Wnk(\hat{C}_{p,\theta})$.
Our aim is to show that, as $n \ra \infty$,
\begin{equation}
\label{eq:expansion_stochastic_term}
	\sup_{\theta \in [0,\pi/4]} \labs \Wnk(\hat{C}_{p,\theta}) - \Wnk(C_{p,\theta}) \rabs = \oh_{\PP}(1).
\end{equation}
We will consider separately $V_{n,p,1}$ and $V_{n,p,2}$, where $V_{n,p,i}(\theta) := \Wnk(\hat{C}_{p,\theta,i})$ with $\hat{C}_{p,\theta,1} :=  \hat{C}_{p,\theta} \cap \{ (x,y) : x < x_p(\theta) \}$ and $\hat{C}_{p,\theta,2} :=  \hat{C}_{p,\theta} \cap \{ (x,y) : x \geq x_p(\theta) \}$. In particular, it would follow from the triangle inequality that the expansion~\eqref{eq:expansion_stochastic_term} is satisfied, provided that
\begin{equation}
\label{eq:expansion_stochastic_term_separately}
	\sup_{\theta \in [0,\pi/4]} \labs V_{n,p,i}(\theta) - \Wnk(C_{p,\theta,i}) \rabs = \oh_{\PP}(1), \qquad i \in \{1,2\},
\end{equation}
where $C_{p,\theta,1} := C_{p,\theta} \cap \{ (x,y) : x < x_p(\theta) \}$ and $C_{p,\theta,2} := C_{p,\theta} \cap \{ (x,y) : x \geq x_p(\theta) \}$.

Let us first consider $i=1$. Recall equation~\eqref{eq:def_hat_sets} and observe that, \mdf{since $\tfrac{n}{k} \Gamma_{jn} (\tfrac{k}{n} x) = x + w_{jn}(x)/\sqrt{k}$ and $\tfrac{n}{k} Q_{jn} (\tfrac{k}{n} x) = x + v_{jn}(x)/\sqrt{k}$,}
\begin{align*}
	\tfrac{n}{k} Q_{2n} \lp \Gamma_{1n} \lp \tfrac{k}{n}x \rp \tan\theta \rp &= x\tan\theta + \frac{z_{n,\theta}(x)}{\sqrt{k}}, \\
	\tfrac{n}{k} Q_{2n} \lp \tfrac{k}{n} y_p \lp \tfrac{n}{k} \Gamma_{1n} \lp \tfrac{k}{n}x \rp \rp \rp &= y_p(x) + \frac{s_{n,p}(x)}{\sqrt{k}},
\end{align*}
where
\begin{align}
	z_{n,\theta}(x) &:= w_{1n}(x) \tan\theta + v_{2n} \lp x\tan\theta + \frac{w_{1n}(x) \tan\theta}{\sqrt{k}} \rp \label{eq:def_z_n}, \\
	s_{n,p}(x) &:= \sqrt{k} \lacc y_p \lp x + \frac{w_{1n}(x)}{\sqrt{k}} \rp - y_p(x) \racc + v_{2n} \lp y_p \lp x + \frac{w_{1n}(x)}{\sqrt{k}} \rp \rp. \label{eq:def_s_n}
\end{align}
Now set, for any $m \in \{0, 1, 2, \ldots, 1/\Delta - 1\}$, with the convention that $0/0 = 0$,
\begin{align*}
	V_{m,\Delta,\theta}^+ &:= \sup_{x \in I_\Delta(m,\theta)} \frac{z_{n,\theta}(x) \wedge \lp s_{n,p}(x) + \sqrt{k} (y_p(x) - x\tan\theta) \rp}{(x\tan\theta)^{1/16}}, \\
	 V_{m,\Delta,\theta}^- &:= \inf_{x \in I_\Delta(m,\theta)} \frac{z_{n,\theta}(x) \wedge \lp s_{n,p}(x) + \sqrt{k} (y_p(x) - x\tan\theta) \rp}{(x\tan\theta)^{1/16}},
\end{align*}
\rev{where the intervals $I_\Delta(m,\theta)$ were defined in~\eqref{eq:I_intervals}.} For fixed $\theta$, note that these intervals cover $[0,x_p(\theta)]$, on which $y_p(x) \geq x\tan\theta$. It follows that on an event whose probability tends to one, $V_{m,\Delta,\theta}^\pm$ is equal to the same expression as in its definition but with the numerator simplified to $z_{n,\theta}(x)$. In the following, we work with these simplified definitions.
Moreover, the weak convergence
\begin{multline} 
\label{eq:z_n_poids}
	\lacc \frac{|z_{n,\theta}(x)|}{(x\tan\theta)^{1/16}} : \theta \in [0,\pi/4], x \in [0,x_p(\theta)] \racc \\
	\wc
	\lacc \frac{W_1(x) \tan\theta - W_2(x\tan\theta)}{(x\tan\theta)^{1/16}} : \theta \in [0,\pi/4], x \in [0,x_p(\theta)] \racc
\end{multline}
holds in the Banach space of bounded functions on the indicated domain.
To see~\eqref{eq:z_n_poids}, recall the definition of $z_{n,\theta}$ in ~\eqref{eq:def_z_n} and note that for any $\theta \in [0,\pi/4]$,
\begin{multline*}
	\frac{z_{n,\theta}(x)}{(x\tan\theta)^{1/16}} = \frac{w_{1n}(x)}{x^{1/16}}(\tan\theta)^{15/16} \\
	+ \frac{v_{2n} \lp x\tan\theta + \frac{w_{1n}(x) \tan\theta}{\sqrt{k}} \rp}{ \lp x\tan\theta + \frac{w_{1n}(x) \tan\theta}{\sqrt{k}} \rp^{1/4}} \lp (x\tan\theta)^{3/4} + \frac{(\tan\theta)^{3/4}}{\sqrt{k}} \frac{w_{1n}(x)}{x^{1/4}} \rp^{1/4}.
\end{multline*}
Using equations~\eqref{eq:w_jn_poids} and~\eqref{eq:quantile_weak_convergences_weighted} in Proposition~\ref{prop:weighted} together with the fact that $x\tan\theta \leq 2^{1/p}$ on the said domain yields~\eqref{eq:z_n_poids}.
In particular, we have
\begin{equation}
\label{eq:z_n_boundedness}
	\sup_{\theta \in [0,\pi/4]} \sup_{x \in [0,x_p(\theta)]} \frac{|z_{n,\theta}(x)|}{(x\tan\theta)^{1/16}} = \Oh_{\PP}(1),
	\qquad n \to \infty.
\end{equation}
Defining
\[
	M_{\Delta,\theta}^\pm := \bigcup_{m=0}^{\Delta - 1} \lacc (x,y) : x \in I_\Delta(m,\theta), 0 \leq y \leq x\tan\theta + \frac{V_{m,\Delta,\theta}^\pm}{\sqrt{k}} (x\tan\theta)^{1/16} \racc,
\]
it is easy to see that for any $\theta \in [0,\pi/4]$ \mdf{we have $M_{\Delta,\theta}^- \subseteq \hat{C}_{\theta,p,1} \subseteq M_{\Delta,\theta}^+$ and thus}
\begin{equation}
\label{eq:bounding_stochastic_1}
	V_{n,\Delta,1}^-(\theta) - R_{n,\Delta,1}(\theta) \leq V_{n,p,1}(\theta) \leq V_{n,\Delta,1}^+(\theta) + R_{n,\Delta,1}(\theta),	
\end{equation}
where
\begin{align}
	V_{n,\Delta,1}^\pm(\theta) &:= \rev{\Wnk(M_{\Delta,\theta}^\pm)}	\label{eq:bounding_processes_1}, \\
	R_{n,\Delta,1}(\theta) &:= \sqrt{k} \tfrac{n}{k} P \lp \tfrac{k}{n} (M_{\Delta,\theta}^+ \setminus M_{\Delta,\theta}^-) \rp. \label{eq:rest_1}
\end{align}

Observe that, due to Assumption~\ref{ass:bias} and the behavior of the marginal tail empirical and quantile processes,
\[
	\sup_{\theta \in [0,\pi/4]} \labs R_{n,\Delta,1}(\theta) - \sqrt{k} \Lambda(M_{\Delta,\theta}^+ \setminus M_{\Delta,\theta}^-) \rabs = \oh_{\PP}(1).
\]
Similar computations as in~\cite[Section~3.1.1]{einmahl2001} show that
\[
	\sqrt{k} \Lambda(M_{\Delta,\theta}^+ \setminus M_{\Delta,\theta}^-) \leq 16 \times 2^{1/p} \sup_{y \geq 0} \lambda(1,y) \max_{m = 0,\ldots,\Delta - 1} (V_{m,\Delta,\theta}^+ - V_{m,\Delta,\theta}^-),
\]
where $\sup_{y \geq 0} \lambda(1,y) < \infty$ since $\lambda$ is continuous on $[0,\infty)^2 \setminus \{(0,0)\}$ thanks to Assumption~\ref{ass:smoothness} and $\lim_{y \ra \infty} \lambda(1,y) = 0$ (the latter limit following from the homogeneity of $\lambda$ in~\eqref{eq:lambda_homogen}).
Since $V_{m,\Delta,0}^+ = V_{m,\Delta,0}^- = 0$ by convention for any value of $m$ and $\Delta$, we deduce from~\eqref{eq:z_n_poids} that, for every $\epsilon > 0$,
\[
	\lim_{\Delta \ra 0} \limsup_{n \ra \infty} \PP^* \lc \sup_{\theta \in [0,\pi/4]} \max_{m = 0,\ldots,\Delta - 1} (V_{m,\Delta,\theta}^+ - V_{m,\Delta,\theta}^-) > \epsilon \rc = 0.
\]
Collecting the latter three equations, we conclude that, for every $\epsilon>0$,
\begin{equation} \label{eq:banane}
	\lim_{\Delta \ra 0} \limsup_{n \ra \infty} \PP^* \lc \sup_{\theta \in [0, \pi/4]} R_{n,\Delta,1}(\theta) > \epsilon \rc = 0.
\end{equation}

Furthermore, since~\eqref{eq:z_n_boundedness} implies that on an event with probability tending to one we have $M_{\Delta,\theta}^\pm \in \cA$ for all $\theta \in [0,\pi/4]$, it follows that for all $\epsilon, \delta > 0$, we have
\begin{multline} \label{eq:limsup3}
	\limsup_{n \ra \infty} \PP^* \lc \max_{\sigma \in \{-,+\}} \sup_{\theta \in [0, \pi/4]} \labs \Wnk(M_{\Delta,\theta}^\sigma) - \Wnk(C_{p,\theta,1}) \rabs > \epsilon \rc \\
	\leq \limsup_{n \ra \infty} \PP^* \lc  \max_{\sigma \in \{-,+\}} \sup_{\theta \in [0, \pi/4]} \rho(M_{\Delta,\theta}^\sigma,C_{p,\theta,1}) \geq \delta \rc \\
	+ \limsup_{n \ra \infty} \PP^* \lc \sup_{A, A' \in \cA, \rho(A,A') < \delta} \labs \Wnk(A) - \Wnk(A') \rabs > \epsilon \rc.
\end{multline}
Standard computations lead to
\begin{multline*}
	\sup_{\theta \in [0, \pi/4]} \rho(M_{\Delta,\theta}^\pm, C_{p,\theta,1}) \\ \leq \frac{16 \times 2^{1/p}}{\sqrt{k}} \sup_{y \geq 0} \lambda(1,y) \sup_{\theta \in [0, \pi/4]} \max_{m = 0,\ldots,\Delta - 1} |V_{m,\Delta,\theta}^\pm| = \oh_{\PP}(1),
\end{multline*}
so that the first limsup on the right hand side of~\eqref{eq:limsup3} is zero. The second limsup can be made arbitrary close to zero as $\delta \downarrow 0$ by~\eqref{eq:asy_equi}.

Combining everything leads to~\eqref{eq:expansion_stochastic_term_separately} for $i=1$. Indeed, for $\epsilon > 0$, we have
\begin{multline*}
	\PP^* \lc \sup_{\theta \in [0, \pi/4]} \labs V_{n,p,1}(\theta) - W_{n,k}(C_{p,\theta,1}) \rabs > \epsilon \rc \\
	\leq \sum_{\sigma \in \{-,+\}} \PP^* \lc \sup_{\theta \in [0, \pi/4]} \labs V_{n,\Delta,1}^\sigma(\theta) - W_{n,k}(C_{p,\theta,1}) \rabs > \epsilon/2 \rc \\
	+ 2 \PP^* \lc \sup_{\theta \in [0, \pi/4]} R_{n,\Delta,1}(\theta) > \epsilon /2 \rc.
\end{multline*}
The first term on the right-hand side vanishes as $n \to \infty$, while the limsup over $n \to \infty$ of the second term can be made arbitrarily small as $\Delta \ra 0$.

Similar computations for the term $i=2$ in~\eqref{eq:expansion_stochastic_term_separately} are to be found in Appendix~F.

\smallskip
\noindent \textbf{The bias term $r_{n,p}(\theta)$.} \quad
\rev{Note that all sets $\hat{C}_{p,\theta}$ for $\theta \in [0, \pi/4]$ are contained in the set $\hat{C}_{p,\pi/4}$ and, with probability tending to one, the latter is contained in the set $\{(x, y) : 0 \le x \wedge y \le M \}$ for $M = M(p) = 2^{1/p}$. It follows that, with probability tending to one,
\begin{align*}
	\lefteqn{
	\sup_{\theta \in [0, \pi/4]} \labs r_{n,p}(\theta) \rabs
	\leq \sqrt{k} \iint_{0 \leq x \wedge y \le M} 
	\labs 
		\frac{k}{n} c \lp \frac{k}{n}x, \frac{k}{n}y \rp - \lambda(x,y) 
	\rabs \d x \d y 
	} \\
	&= \sqrt{k} M \iint_{0 \leq u \wedge v \leq 1} \labs \frac{k}{n} M c \lp \frac{k}{n}M u, \frac{k}{n}M v \rp - \lambda(u,v) \rabs \d u \d v \\
	&= \sqrt{k} M \mathcal{D}_{n/k} \lp \frac{k}{n}M \rp 
	+ \sqrt{k} M \iint_{ \substack{0 \leq u \wedge v \leq 1 \\ u \vee v > \frac{n}{k}} } \frac{k}{n} M c \lp \frac{k}{n}M u, \frac{k}{n}M v \rp \d u \d v \\
	&\quad +  \sqrt{k} M \iint_{ \substack{0 \leq u \wedge v \leq 1 \\ u \vee v > \frac{n}{k}} } \lambda(u,v) \d u \d v.
\end{align*}
By the first part of Assumption~\ref{ass:bias}, the first term in the right-hand side vanishes asymptotically as $n \ra \infty$. The second integral actually equals zero for any value of $n \in \N$ since the copula density $c$ vanishes outside $[0,1]^2$. For the last integral, assuming $n$ large enough so that $n/k > 1$, the domain of integration is the union of two distinct parts given by
$\{(u,v): 0 \leq u \leq 1, v > n/k\}$ and $\{(u,v): u > n/k, 0 \leq v \leq 1\}$. For the first part, we have
\[
	M \sqrt{k} \iint_{\substack{0 \leq u \leq 1 \\ v > n/k}} \lambda(u,v) \d u \d v
	\leq M \sqrt{k} \lp \Phi_p(\tfrac{\pi}{2}) - \Phi_p(\tfrac{\pi}{2}-\tfrac{k}{n}) \rp,
\]
while for the second part, we have
\[
	M \sqrt{k} \iint_{\substack{u > n/k \\ 0 \leq v \leq 1}} \lambda(u,v) \d u \d v
	\leq M \sqrt{k} \Phi_p(\tfrac{k}{n}).
\]
By the second part of Assumption~\ref{ass:bias}, those two quantities converge to zero as $n \ra \infty$.}

\smallskip
\noindent \textbf{The random set term $Z_{n,p}(\theta)$.} \quad
Similarly as for the stochastic term, we work on
\[
	Z_{n,p,i}(\theta) 
	= \sqrt{k} \lp \Lambda \lp \hat{C}_{p,\theta,i} \rp - \Lambda \lp C_{p,\theta,i} \rp \rp
\]
separately for $i = 1,2$. 
In particular, our aim is to show that, as $n \ra \infty$,
\begin{multline}
\label{eq:expansion_random_set_term_1}
	\sup_{\theta \in [0,\pi/4]} \labs Z_{n,p,1}(\theta) - \int_0^{x_p(\theta)} \lambda(x,x\tan\theta) \lacc w_{1n}(x) \tan\theta - w_{2n} \lp x \tan\theta \rp \racc \d x \rabs \\ = \oh_{\PP}(1)
\end{multline}
and, for $p < \infty$ (since this is the most involved case),
\begin{multline}
\label{eq:expansion_random_set_term_2}
	\sup_{\theta \in [0,\pi/4]} \labs Z_{n,p,2}(\theta) - \int_{x_p(\theta)}^\infty \lambda(x,y_p(x)) \lacc y_p'(x) w_{1n}(x) - w_{2n} \lp y_p(x) \rp \racc \d x \rabs 
	\\ = \oh_{\PP}(1).
\end{multline}

\revtwo{We start with~\eqref{eq:expansion_random_set_term_1}. Since, as pointed out earlier, $y_p(x) \geq x\tan\theta$ on $[0,x_p(\theta)]$, we have, on an event with probability tending to one,
\[
	Z_{n,p,1}(\theta) = \sqrt{k} \int_0^{x_p(\theta)} \int_{x\tan\theta}^{x\tan\theta + \tfrac{z_{n,\theta}(x)}{\sqrt{k}}} \lambda(x,y) \d y \d x,
\]
where $z_{n,\theta}$ was defined in~\eqref{eq:def_z_n}.
Changing variables $y(z) = x\tan\theta + z/\sqrt{k}$ and making use of the mean value theorem leads to
\[
	Z_{n,p,1}(\theta) = \int_0^{x_p(\theta)} \lambda \lp x, x\tan\theta + \frac{y(x,\theta)}{\sqrt{k}} \rp z_{n,\theta}(x) \d x,
\]
where $y(x,\theta)$ is between $0$ and $z_{n,\theta}(x)$. Observing that
\begin{align*}
	&\lambda \lp x, x\tan\theta + \frac{y(x,\theta)}{\sqrt{k}} \rp z_{n,\theta}(x) - \lambda(x,x\tan\theta) \lacc w_{1n}(x) \tan\theta - w_{2n} \lp x \tan\theta \rp \racc \\
	&\qquad = \lacc \lambda \lp x, x\tan\theta + \frac{y(x,\theta)}{\sqrt{k}} \rp - \lambda(x,x\tan\theta) \racc z_{n,\theta}(x) \\
	&\qquad \qquad + \left[ z_{n,\theta}(x) - \lacc w_{1n}(x) \tan\theta - w_{2n} \lp x \tan\theta \rp \racc \right] \lambda(x,x\tan\theta),
\end{align*}
we see that sufficient conditions for relation~\eqref{eq:expansion_random_set_term_1} to hold are provided by
\begin{align}
	\sup_{\theta \in [0,\pi/4]} \int_0^{x_p(\theta)} \labs \lambda \lp x, x\tan\theta + \frac{y(x,\theta)}{\sqrt{k}} \rp - \lambda(x,x\tan\theta) \rabs |z_{n,\theta}(x)| \d x &= \oh_{\PP}(1) \label{eq:expansion_random_set_term_1_1} \\
	\sup_{\theta \in [0,\pi/4]} \int_0^{x_p(\theta)} \lambda(x,x\tan\theta) \labs z_{n,\theta}(x) - \lacc w_{1n}(x) \tan\theta - w_{2n} \lp x \tan\theta \rp \racc \rabs \d x &= \oh_{\PP}(1).
	\label{eq:expansion_random_set_term_1_2}
\end{align}}

\revtwo{Let us first consider~\eqref{eq:expansion_random_set_term_1_1}. Since we will need uniform continuity of $\lambda$, we will need to work on a compact domain. Hence, for some $M > 1$, we consider separately the case $\theta \in [0,\arctan(1/M)]$, on which we have no control on the value of $x_p(\theta)$ which can be arbitrary large, and the case $\theta \in [\arctan(1/M), \pi/4]$ on which it is easy to see that $x_p(\theta) \leq 1 + M$. It will be useful to note that as $n \ra \infty$, by~\eqref{eq:z_n_boundedness},
\begin{equation}
\label{eq:z_n_boundedness_2}
	\|z_n\| := \sup_{\theta \in [0,\pi/4]} \sup_{x \in [0,x_p(\theta)]} |z_{n,\theta}(x)| = \Oh_{\PP}(1).
\end{equation}}

\revtwo{Consider the supremum of~\eqref{eq:expansion_random_set_term_1_1} on the region $\theta \in [\arctan(1/M), \pi/4]$ first, for which the domain of integration is included in $x \in [0,1+M]$. We split this domain further on $x \in [0,\delta]$ and $x \in [\delta, 1+M]$ for some $\delta > 0$. The latter case will be considered first. Observe that, by~\eqref{eq:z_n_boundedness_2},
\[
	\sup_{\substack{\theta \in [\arctan(1/M), \pi/4]\\x \in [\delta,1+M]}}
	\left\| \lp x, x\tan\theta + \frac{y(x,\theta)}{\sqrt{k}} \rp - (x,x\tan\theta) \right\|_1
	\leq \frac{\|z_n\|}{\sqrt{k}} = \oh_{\PP}(1),
\]
which, by uniform continuity of $\lambda$ on $[\delta, 1+M] \times [\delta/(2M), 2+M]$ implies
\[
	\sup_{\theta \in [\arctan(1/M), \pi/4]} \sup_{x \in [\delta,1+M]} \labs \lambda \lp x, x\tan\theta + \frac{y(x,\theta)}{\sqrt{k}} \rp - \lambda(x,x\tan\theta) \rabs = \oh_{\PP}(1).
\]
Consequently,
\[
	\sup_{\theta \in [\arctan(1/M), \pi/4]} \int_\delta^{x_p(\theta)} \labs \lambda \lp x, x\tan\theta + \frac{y(x,\theta)}{\sqrt{k}} \rp - \lambda(x,x\tan\theta) \rabs |z_{n,\theta}(x)| \d x
\]
is $\oh_{\PP}(1)$ too.
The next step is to consider the same region for $\theta$ but $x \in [0,\delta]$. The idea is to bound the integral as follows: by homogeneity of $\lambda$ in~\eqref{eq:lambda_homogen}, we have
\begin{multline*}
	\sup_{\theta \in [\arctan(1/M), \pi/4]} \int_0^\delta \labs \lambda \lp x, x\tan\theta + \frac{y(x,\theta)}{\sqrt{k}} \rp - \lambda(x,x\tan\theta) \rabs |z_{n,\theta}(x)| \d x \\	
	\leq 32 \sup_{y \geq 0} \lambda(1,y) \sup_{\theta \in [\arctan(1/M), \pi/4]} \sup_{x \in [0,\delta]} \frac{|z_{n,\theta}(x)|}{(x\tan\theta)^{1/16}} (\tan\theta)^{1/16} \delta^{1/16}.
\end{multline*}
Using~\eqref{eq:z_n_boundedness} and picking $\delta$ small enough permits to conclude and to show that~\eqref{eq:expansion_random_set_term_1_1} is verified on the region $\theta \in [\arctan(1/M), \pi/4]$.}

\revtwo{Consider now the supremum of~\eqref{eq:expansion_random_set_term_1_1} on the region $\theta \in [0,\arctan(1/M)]$, we split again the domain of integration in two parts : $x \in [0,1]$ and $x \in [1, x_p(\theta)]$. We start with the region near zero. Similar computations as done previously lead to
\begin{multline*}
	\sup_{\theta \in [0, \arctan(1/M)]} \int_0^1 \labs \lambda \lp x, x\tan\theta + \frac{y(x,\theta)}{\sqrt{k}} \rp - \lambda(x,x\tan\theta) \rabs |z_{n,\theta}(x)| \d x \\	
	\leq 32 \sup_{y \geq 0} \lambda(1,y) \sup_{\theta \in [0,\arctan(1/M)]} \sup_{x \in [0,1]} \frac{|z_{n,\theta}(x)|}{(x\tan\theta)^{1/16}} \times (\tan\theta)^{1/16}.
\end{multline*}
Picking $M>1$ large enough permits to conclude for this region. For the region $x \in [1, x_p(\theta)]$, using the homogeneity of $\lambda$ in~\eqref{eq:lambda_homogen} and factoring out $(x\tan\theta)^{1/16}$, we can show that
\begin{multline*}
	\sup_{\theta \in [0, \arctan(1/M)]} \int_1^{x_p(\theta)} \labs \lambda \lp x, x\tan\theta + \frac{y(x,\theta)}{\sqrt{k}} \rp - \lambda(x,x\tan\theta) \rabs |z_{n,\theta}(x)| \d x \\	
	\leq 64 
	\sup_{\substack{\theta \in [0,\arctan(1/M)]\\x \in [1,x_p(\theta)]}}
	\frac{|z_{n,\theta}(x)|}{(x\tan\theta)^{1/16}}
	\times 
	\sup_{\substack{\theta \in [0,\arctan(1/M)]\\|z| \leq \|z_{n}\|}}	
	\lambda(1, \tan\theta + z/\sqrt{k}).
\end{multline*}
By continuity of $\lambda$, we have $\lim_{y \ra 0} \lambda(1,y) = \lambda(1, 0) = 0$: indeed, we have
\begin{align*}
	\infty &> \Lambda([1,\infty) \times [0, 1]) = \iint_{x \ge 1 \ge y \ge 0} \lambda(x,y) \, \d y \, \d x \\
	&= \iint_{x \ge 1 \ge y \ge 0} x^{-1} \lambda(1,y/x) \, \d y \, \d x = \int_{x\ge 1} \int_{z=0}^{1/x} \lambda(1,z) \, \d z \, \d x,
\end{align*}
so that if $\lambda(1, 0) > 0$, the inner integral would of the order $\lambda(1,0)/x$ as $x \to \infty$, making the outer integral diverge, yielding a contradiction. 
Again using~\eqref{eq:z_n_boundedness_2} permits to conclude for the region $\theta \in [0, \arctan(1/M)]$ by picking $M>1$ large enough. Combining everything, we have proven~\eqref{eq:expansion_random_set_term_1_1}.}

\revtwo{We now turn to~\eqref{eq:expansion_random_set_term_1_2}. By arguments similar to those leading up to~\eqref{eq:z_n_poids}, we have
\begin{multline*}
	\sup_{\theta \in [0,\pi/4]} \sup_{x \in [0,x_p(\theta)]} \frac{\labs z_{n,\theta}(x) - \lacc w_{1n}(x) \tan\theta - w_{2n} \lp x \tan\theta \rp \racc \rabs}{(x\tan\theta)^{1/16}} \\
	= \sup_{\theta \in [0,\pi/4]} \sup_{x \in [0,x_p(\theta)]} \frac{\labs v_{2n} \lp x\tan\theta + \frac{w_{1n}(x) \tan\theta}{\sqrt{k}} \rp + w_{2n}(x\tan\theta) \rabs}{(x\tan\theta)^{1/16}} = \oh_{\PP}(1). 
\end{multline*}
It follows from the homogeneity~\eqref{eq:lambda_homogen} and a computation similar to the previous ones that~\eqref{eq:expansion_random_set_term_1_2} holds. As already explained, \eqref{eq:expansion_random_set_term_1_1} and \eqref{eq:expansion_random_set_term_1_2} imply \eqref{eq:expansion_random_set_term_1}.}

Finally, we show~\eqref{eq:expansion_random_set_term_2}. Since $y_p(x) \leq x\tan\theta$ on $[x_p(\theta), \infty]$, we have, on an event with probability tending to one,
\[
	Z_{n,p,2}(\theta) = \sqrt{k} \int_{x_p(\theta)}^\infty \int_{y_p(x)}^{y_p(x) + s_{n,p}(x)/\sqrt{k}} \lambda(x,y) \d y \d x,
\]
where $s_{n,p}$ was defined in~\eqref{eq:def_s_n}.
Changing variables $y(s) = y_p(x) + s/\sqrt{k}$ and making use of the mean value theorem leads to
\[
	Z_{n,p,2}(\theta) = \int_{x_p(\theta)}^\infty \lambda \lp x, y_p(x) + \frac{s(x,p)}{\sqrt{k}} \rp s_{n,p}(x) \d x,
\]
where $s(x,p)$ is between $0$ and $s_{n,p}(x)$. Observing that
\begin{multline*}
	\lambda \lp x, y_p(x) + \frac{s(x,p)}{\sqrt{k}} \rp s_{n,p}(x) - \lambda(x,y_p(x)) \lacc y_p'(x) w_{1n}(x) - w_{2n} \lp y_p(x) \rp \racc \\
	= \lacc \lambda \lp x, y_p(x) + \frac{s(x,p)}{\sqrt{k}} \rp - \lambda(x,y_p(x)) \racc s_{n,p}(x) \\
	+ \left[ s_{n,p}(x) - \lacc y_p'(x) w_{1n}(x) - w_{2n} \lp y_p(x) \rp \racc \right] \lambda(x,y_p(x)),
\end{multline*}
and that for any $\theta \in [0,\pi/4]$ and $1 \leq p < \infty$ we have $x_p(\theta) \geq 2^{1/p}$, we see that sufficient conditions for~\eqref{eq:expansion_random_set_term_2} to hold are provided by
\begin{align}
	\int_{2^{1/p}}^\infty \labs \lambda \lp x, y_p(x) + \frac{s(x,p)}{\sqrt{k}} \rp - \lambda(x,y_p(x)) \rabs |s_{n,p}(x)| \d x &= \oh_{\PP}(1),
	\label{eq:expansion_random_set_term_2_1} \\
	\int_{2^{1/p}}^\infty \lambda(x,y_p(x)) \labs s_{n,p}(x) - \lacc y_p'(x) w_{1n}(x) - w_{2n} \lp y_p(x) \rp \racc \rabs \d x &= \oh_{\PP}(1).
	\label{eq:expansion_random_set_term_2_2}
\end{align}

Let us first consider~\eqref{eq:expansion_random_set_term_2_1}. Since we will need uniform continuity of $\lambda$ again, we want to work on a compact domain. Hence, for some $M > 1$, we consider separately the cases $x \in [2^{1/p},M]$ and $x \in [M,\infty)$. Note that by~\eqref{eq:s_n_poids} in Appendix~F,
\begin{equation}
\label{eq:s_n_boundedness}
	\|s_{n,p}\| := \sup_{x \in [2^{1/p},\infty)} |s_{n,p}(x)| = \Oh_{\PP}(1)
\end{equation}
as $n \ra \infty$, and, by the mean value theorem, for each $x \in [2^{1/p},\infty)$,
\begin{equation}
\label{eq:s_n_MVT}
	s_{n,p}(x) = y_p'(c_n(x)) w_{1n}(x) + v_{2n} \lp y_p \lp x + \frac{w_{1n}(x)}{\sqrt{k}} \rp \rp,
\end{equation}
where $c_n(x)$ is between $x$ and $x + w_{1n}(x)/\sqrt{k}$ and $y_p'(x) = -1/(x^p-1)^{1 + 1/p}$. Since
\[
	\sup_{x \in [2^{1/p},\infty)} \left\| \lp x, y_p(x) + \frac{s(x,p)}{\sqrt{k}} \rp - (x,y_p(x)) \right\|_1 \leq \frac{\|s_{n,p}\|}{\sqrt{k}} = \oh_{\PP}(1),
\]
it follows from the uniform continuity of $\lambda$ on compact sets included in $[0,\infty)^2 \setminus \{(0,0)\}$ that for any $M>1$,
\[
	\sup_{x \in [2^{1/p},M]} \labs \lambda \lp x, y_p(x) + \frac{s(x,p)}{\sqrt{k}} \rp - \lambda(x,y_p(x)) \rabs = \oh_{\PP}(1)
\]
and~\eqref{eq:expansion_random_set_term_2_1} follows easily for the region $x \in [2^{1/p},M]$. For the region  $x \in [M,\infty]$, just observe that by~\eqref{eq:s_n_boundedness},
\begin{multline}
\label{eq:integrals}
	\int_M^\infty \labs \lambda \lp x, y_p(x) + \frac{s(x,p)}{\sqrt{k}} \rp - \lambda(x,y_p(x)) \rabs |s_{n,p}(x)| \d x \\
	\leq \|s_{n,p} \| \lacc \int_M^\infty \lambda \lp x, y_p(x) + \frac{s(x,p)}{\sqrt{k}} \rp \d x + \int_M^\infty \lambda(x,y_p(x)) \d x \racc.
\end{multline}
We first consider the second integral which is easier to deal with. By homogeneity~\eqref{eq:lambda_homogen} and a change of variables,
\begin{align*}
	&\int_M^\infty \lambda(x,y_p(x)) \d x 
	= \int_M^\infty \lambda((x^p-1)^{1/p}, 1) (y_p(x))^{-1} \d x \\
	&\quad \leq \int_{(M^p-1)^{1/p}}^\infty \lambda(u,1) \lp \frac{u^p}{u^p + 1} \rp^{1-1/p} \d u
	\leq \int_{(M^p-1)^{1/p}}^\infty \lambda(u,1) \d u.
\end{align*}
Since $\int_0^\infty \lambda(u,1) \d u = 1$, we may apply the dominated convergence theorem to ensure that the above integral can be made as small as desired provided we pick $M>1$ large enough. Now consider the second integral on the right-hand side of~\eqref{eq:integrals}. Since $0 < s(x,p) < s_{n,p}(x)$ or $0 > s(x,p) > s_{n,p}(x)$, we have, by~\eqref{eq:s_n_poids}, for all $M>2^{1/p}$,
\[
	\int_M^\infty \lambda \lp x, y_p(x) + \frac{s(x,p)}{\sqrt{k}} \rp \d x 
	=  \int_M^\infty \lambda(x,y_p(x)) \d x + \oh_{\PP}(1),
\]
and since the integral on the right-hand side vanishes as $M \ra \infty$, we have shown~\eqref{eq:expansion_random_set_term_2_1}.

Let us now consider~\eqref{eq:expansion_random_set_term_2_2}. Using~\eqref{eq:marginal_weak_convergences},~\eqref{eq:quantile_weak_convergences}, the continuity of $y_p$ and its derivative $y_p'$, we observe that
\begin{multline*}
	\sup_{x \in [2^{1/p}, \infty)} \labs s_{n,p}(x) - \lacc y_p'(x) w_{1n}(x) - w_{2n}(y_p(x)) \racc \rabs \\
	\leq \sup_{x \in [2^{1/p}, \infty)} 
	\Big| \big( y_p'(c_n(x)) - y_p'(x) \big) w_{1n}(x)
	+ \big( v_{2n} \big( y_p (x + \tfrac{w_{1n}(x)}{\sqrt{k}}) \big) + w_{2n}(y_p(x)) \big) \Big|,
\end{multline*}
which is $\oh_{\PP}(1)$.
By integrability of the function $x \in [2^{1/p}, \infty) \mapsto \lambda(x,y_p(x))$, it follows that
\[
	\int_{2^{1/p}}^\infty \lambda(x,y_p(x)) \labs s_{n,p}(x) - \lacc y_p'(x) w_{1n}(x) - w_{2n} \lp y_p(x) \rp \racc \rabs \d x = \oh_{\PP}(1),
\]
and~\eqref{eq:expansion_random_set_term_2_2} holds.

\smallskip
\noindent \textbf{Conclusion of the proof of Theorem~\ref{thm:expansionEAM}.} \quad
Adding the asymptotic expansions of the three terms in \eqref{eq:threeterms} permits to conclude the proof for $\theta \in [0,\pi/4]$. By the symmetry argument in Appendix~E, we can extend the conclusion to all $\theta \in [0,\pi/2]$. The proof of Theorem~\ref{thm:expansionEAM} is complete.

\medskip
\noindent \rev{\textbf{Acknowledgments.}} \quad
\rev{Stéphane Lhaut was financially supported by the Fonds de la Recherche Scientifique – FNRS through a FRIA grant. The authors would like to thank two anonymous Referees for helpful comments and suggestions on an earlier version of the paper.}

\bibliographystyle{acm}
\bibliography{references.bib}

\newpage
\appendix

\renewcommand{\thesection}{\Alph{section}} 
\numberwithin{equation}{section}

\begin{center}
\Huge \textbf{Supplement}
\end{center}

Appendix~\ref{app:link} provides the link between the densities of the tail copula measure $\Lambda$ and the angular measure $\Phi_p$. Appendices~\ref{app:proofProp1} and~\ref{app:proofProp2} contain the proofs of Propositions~\ref{prop:general_weak_convergence} and~\ref{prop:weighted} in the paper, respectively. Appendix~\ref{app:proofCor2} spells out the proof of Corollary~\ref{cor:quantile}. Finally, Appendices~\ref{app:proofSym} and~\ref{app:proofi2} provide further details on certain parts of the proof of Theorem~\ref{thm:expansionEAM}.

\section{Link between densities of $\Lambda$ and $\Phi_p$}
\label{app:link}

For any $x,y > 0$, we have
\[
\lambda(x,y) = \frac{\partial^2}{\partial x \partial y} \Lambda \lp (0,x] \times (0,y] \rp = \frac{\partial^2}{\partial x \partial y} \nu \lp [x^{-1},\infty) \times [y^{-1},\infty) \rp,
\]
where $\nu$ factorizes in polar coordinates under $T$ in~\eqref{eq:polar_nu}, i.e., $T \# \nu = \nu_{-1} \otimes \Phi_p$, so that 
\begin{align*}
	&\nu \lp [x^{-1},\infty) \times [y^{-1},\infty) \rp \\
	&= \lp \nu_{-1} \otimes \Phi_p \rp \lp \lacc T(u,v): u \geq x^{-1}, v \geq y^{-1}  \racc  \rp \\
	&= \lp \nu_{-1} \otimes \Phi_p \rp \lp \lacc (r,\theta): \frac{r\sin\theta}{\|(\sin\theta,\cos\theta)\|_p} \geq x^{-1}, \frac{r\cos\theta}{\|(\sin\theta,\cos\theta)\|_p} \geq y^{-1}  \racc \rp \\
	&= \int_0^{\pi/2} \lp \frac{x\sin\theta}{\|(\sin\theta,\cos\theta)\|_p} \wedge \frac{y\cos\theta}{\|(\sin\theta,\cos\theta)\|_p} \rp \varphi_p(\theta) \d\theta \\
	&= x \int_0^{\arctan(y/x)} \frac{\sin \theta}{\|(\sin\theta,\cos\theta)\|_p} \varphi_p(\theta) \d\theta \\
	&\qquad + y \int_{\arctan(y/x)}^{\pi/2} \frac{\cos \theta}{\|(\sin\theta,\cos\theta)\|_p} \varphi_p(\theta) \d\theta. 
\end{align*}
Differentiating this expression with respect to $x$ and $y$ using the Leibniz differentiation rule yields the desired formula~\eqref{eq:densities_relation} for $\lambda$, showing that its continuity on $(0,\infty)^2$ is equivalent to the one of $\varphi_p$ on $(0,\pi/2)$.

\section{Proof of Proposition~\ref{prop:general_weak_convergence}}
\label{app:proofProp1}

The proof is an application of Theorem~19.28 in~\cite{VVV1998} which we recall here for convenience.
\rev{
\begin{theorem*}\itshape
Let $X_1,\ldots,X_n$ be a random sample from a probability distribution $P$ on a measurable space $(S, \mathcal{S})$. Let $P_n$ denote the associated empirical distribution. Let $\cF_n = \{f_{n,t}: S \ra \R,  \, t \in T\}$ be a class of measurable functions indexed by a totally bounded semimetric space $(T,\rho)$ satisfying
\[
	\lim_{n \ra \infty} \sup_{\rho(s,t) < \delta_n} P \lp ( f_{n,s} - f_{n,t} )^2 \rp = 0, \qquad \forall \delta_n \downarrow 0,
\]
and with envelope function $F_n$ satisfying the Lindeberg condition
\[
	P \lp F_n^2 \rp = \Oh(1) \qquad \text{and} \qquad P \lp F_n^2 \, \1\{F_n > \epsilon \sqrt{n}\} \rp = \oh(1), \quad \epsilon > 0.
\]
If every class $\cF_n$ is suitably measurable in the sense of Example~2.3.4 in~\cite{VVV1996} and if
\[
	\lim_{n \ra \infty} \sup_{Q} \int_{0}^{\delta_n} \sqrt{\log N(\epsilon \|F_n\|_{Q,2}, \cF_n, L_2(Q))} \d\epsilon = 0, \qquad \forall \delta_n \downarrow 0,
\]
where the supremum is taken over all finitely discrete probability measures $Q$ on $(S, \mathcal{S})$ for which $\cF_n$ is not identically zero and $N$ is the covering number as defined in~\cite[page~274]{VVV1998}, then the sequence of empirical processes $\{G_n(f_{n,t}) := \sqrt{n}(P_n(f_{n,t}) - P(f_{n,t})): t \in T\}$ converges in distribution to a tight Gaussian process, provided the sequence of covariance functions $P(f_{n,s}f_{n,t}) - P(f_{n,s})P(f_{n,t})$ converges pointwise on $T \times T$.
\end{theorem*}
}

\begin{proof}[Proof of Proposition~\ref{prop:general_weak_convergence}]
For any $n \in \N$ and $A \in \cA$, let $\cF_n$ be the class of functions $f_{n,A} : \R^d \ra \R$ defined by
\[
	f_{n,A}(x) := \sqrt{\tfrac{n}{k}} \1 \lacc x \in \tfrac{k}{n} A \racc.
\]
\rev{These classes satisfy the measurability assumptions given the assumptions on the class $\cA$.}
Note that for $A \in \cA$,
\[
	G_n \lp f_{n,A} \rp 
	= \sqrt{k} \lp \tfrac{n}{k} P_n ( \tfrac{k}{n} A ) - \tfrac{n}{k} P ( \tfrac{k}{n} A ) \rp.
\]
Let us verify that the technical conditions underlying Theorem~19.28 in~\cite{VVV1998} are satisfied.

We first show that the metric space $(\cA, \rho)$ is totally bounded. Indeed, noting that $\Lambda(L_M) \leq d \times M$, since each element of $\cA$ is included in $L_M$, we may construct the standard empirical process with respect to the probability measure $P_M(\cdot) := \Lambda(\cdot \cap L_M) / \Lambda(L_M)$ on $L_M$,
indexed by functions $f_A := \1_A$ for $A \in \cA$, and use the fact that $\cA$ is a VC-class with $|f_A| \leq F_M := \1_{L_M}$ pointwise for each $A \in \cA$. Apply Theorem~19.14 in \cite{VVV1998} to conclude that the considered process converges weakly in $\ell^\infty(\cA)$, where we identify $\cA$ with its class of indicator functions. In view of Theorem~18.14 and Lemma~18.15 in~\cite{VVV1998}, the metric space $(\cA, \rho_M)$ with $\rho_M := \rho/\Lambda(L_M)$ must be totally bounded, which also guarantees that $(\cA, \rho)$ is totally bounded.

We now show that
\begin{equation}
	\label{eq:rho_relation_with_L_2}
	\lim_{n \ra \infty} \sup_{A,A' \in \cA: \rho(A,A') < \delta_n} P \lp (f_{n,A} - f_{n,A'})^2 \rp = 0
\end{equation}
\mdf{whenever $\delta_n \downarrow 0$ as $n \to \infty$.}
Indeed,
\begin{align*}
		\sup_{A,A': \rho(A,A') < \delta_n} P \lp (f_{n,A} - f_{n,A'})^2 \rp
	&= \sup_{A,A': \rho(A,A') < \delta_n} \tfrac{n}{k} P \lp (\1_{(k/n) A} - \1_{(k/n) A'})^2 \rp \\
	&= \sup_{A,A': \rho(A,A') < \delta_n} \tfrac{n}{k} P \lp \tfrac{k}{n} (A \symdif A') \rp.
\end{align*}
Fix $\epsilon > 0$. It follows from our assumptions that there exists $N_1(\epsilon) \in \N$ such that for $n \geq N_1(\epsilon)$,
\[
\sup_{A,A' \in \cA: \rho(A,A') < \delta_n} \labs \tfrac{n}{k} P \lp \tfrac{k}{n} (A \symdif A') \rp - \Lambda(A \symdif A') \rabs \leq \epsilon/2,
\]
while, since $\delta_n \ra 0$ as $n \ra \infty$, we may find $N_2(\epsilon) \in \N$ such that for $n \geq N_2(\epsilon)$,
\[
\sup_{A,A' \in \cA: \rho(A,A') < \delta_n} \Lambda(A \symdif A') \leq \epsilon/2.
\] 
It follows that for $n \geq \max\{N_1(\epsilon),N_2(\epsilon)\}$, we have
\[
\sup_{A,A' \in \cA: \rho(A,A') < \delta_n} P \lp (f_{n,A} - f_{n,A'})^2 \rp \leq \epsilon,
\]
which yields~\eqref{eq:rho_relation_with_L_2}.

We now verify that the classes $\cF_n$ possess envelope functions $F_n$ satisfying the Lindeberg condition. It suffices to note that for every $n \in \N$ and $A \in \cA$,
\[
f_{n,A}(x) 
= \sqrt{\tfrac{n}{k}} \1 \lacc x \in \tfrac{k}{n} A \racc
\leq \sqrt{\tfrac{n}{k}} \1 \lacc x \in \tfrac{k}{n} L_M \racc
=: F_n(x).
\]
The first part of the Lindeberg condition clearly holds since
\[
\lim_{n \ra \infty} P \lp F_n^2 \rp
= \lim_{n \ra \infty} \tfrac{n}{k} P \lp \tfrac{k}{n} L_M \rp
= \Lambda \lp L_M \rp \leq d \cdot M.
\]
The second part of the Lindeberg condition is immediate since for any $\epsilon > 0$, the indicator $\1\{F_n > \epsilon \sqrt{n}\}$ is zero as soon as $1/\sqrt{k} \leq \epsilon$, which is the case for $n$ large enough, since, by assumption, $k = k(n) \ra \infty$. 

The last condition to verify is the one associated to the uniform entropy integral. Here, since the VC dimension of any $\cF_n$ equals the one of $\cA$, which is finite by assumption, we may simply apply~\cite[Example~2.11.24]{VVV1996}.

Consequently, Theorem~19.28 in~\cite{VVV1996} applies and the covariance of the limiting Gaussian process is obtained by computing the limit
\[
\lim_{n \ra \infty} P \lp f_{n,A} f_{n,A'} \rp - P \lp f_{n,A} \rp P \lp f_{n,A'} \rp = \Lambda(A \cap A'),
\]
for $A, A' \in \cA$, which follows from our assumption.

Finally, we show~\eqref{eq:asy_equi}. By Example~1.5.10 in~\cite{VVV1996} the processes $\Wnk$ are asymptotically equicontinuous in probability with respect to the standard deviation semi-metric $\rho_2(A, A') := (\EE[ |W_\Lambda(A) - W_\Lambda(A')|^2])^{1/2}$. Since
\[
\EE[ |W_\Lambda(A) - W_\Lambda(A')|^2] = \Lambda(A) + \Lambda(A') - 2 \Lambda(A \cap A')
= \Lambda(A \symdif A') = \rho(A,A'), 
\]
the said equicontinuity property holds with respect to $\rho$ as well.
\end{proof}

\section{Proof of Proposition~\ref{prop:weighted}}
\label{app:proofProp2}

\rev{For any $n \in \N$ and $t \in T:= [0,\infty)$, let $\cF_n$ be the class of functions $f_{n,t}: [0,\infty) \ra \R$ defined by
\[
	f_{n,t}(x) := \sqrt{\tfrac{n}{k}} \1 \lp x \leq \tfrac{k}{n}t \rp w(t) \1 \lp t \leq \tfrac{n}{k} \rp.
\]
These classes trivially satisfy the measurability assumptions underlying Theorem~19.28 in~\cite{VVV1998} since it suffices to consider positive rational coordinates $t$.
For each $t \in T$,
\[
	G_n(f_{n,t}) = \sqrt{n} \lp P_{n}(f_{n,t}) - P(f_{n,t}) \rp = w_{n}(t)w(t),
\]
where $P_{n}$ is the empirical distribution of the i.i.d.\ sample $U_{1}, \ldots, U_{n}$ whose common law is $P$, the uniform distribution on $[0,1]$.}

\rev{Our aim is to apply Theorem~19.28 in~\cite{VVV1998}. We first need to find a metric $\rho$ on $T$ such that the metric space $(T,\rho)$ is totally bounded and
\begin{equation}
\label{eq:Pfnsfnt}
	\lim_{n \ra \infty} \sup_{\rho(s,t) < \delta_n} P \lp (f_{n,s} - f_{n,t})^2 \rp = 0,
\end{equation}
for any $(\delta_n)_{n \in \N}$ such that $\delta_n \downarrow 0$. Since the weak limit of the process is expected to be centered Gaussian process, we may choose $\rho$ to be the ``standard deviation metric'' as in~\cite[Lemma~18.15]{VVV1998}. After some computations, we find that it is given by
\[
	\rho(s,t) := \sqrt{w(t)^2 t + w(s)^2 s - 2w(t)w(s) (s \wedge t)}, \qquad s,t \in T. 
\]
With this metric, it is easily checked that \eqref{eq:Pfnsfnt} holds and that the metric space $(T,\rho)$ is totally bounded.}

\rev{We now verify that the classes $\cF_n$ possess envelope functions $F_n$ satisfying the Lindeberg condition. It suffices to note that for every $n \in \N$ and $t \in T$,
\[
	f_{n,t}(x) 
	\leq 
	\sqrt{\tfrac{n}{k}} w \lp \tfrac{n}{k} x \rp \1(x \leq 1) 
	=: F_n(x).
\]
We have
\begin{align*}
	P \lp F_n^2 \rp 
	&= \frac{n}{k} \int_0^1 w \lp \frac{n}{k} x \rp^2 \d x \\
	&= \frac{n}{k} \left\{  \int_0^{k/n}  w \lp \frac{n}{k} x \rp^2 \d x + \int_{k/n}^1 w \lp \frac{n}{k} x \rp^2 \d x \right\} \\
	&= \frac{1}{1-2\delta} + \frac{1- \lp \tfrac{k}{n} \rp^{2\eta-1}}{2\eta-1} \rightarrow \frac{1}{1-2\delta} + \frac{1}{2\eta-1}
\end{align*}
as $n \ra \infty$, so that the first part of the Lindeberg condition holds. 
To verify the second part, we note that for any $\epsilon > 0$,
\begin{align*}
	P \lp F_n^2 \1(F_n > \epsilon \sqrt{n}) \rp
	&= \tfrac{n}{k} \int_0^1  w \lp \tfrac{n}{k} x \rp^2 \1 \lp w \lp \tfrac{n}{k} x \rp > \epsilon \sqrt{k} \rp \d x \\
	&\leq \int_0^{\infty} w(y)^2 \1 \lp w(y) > \epsilon \sqrt{k} \rp \d y.
\end{align*}
Since $y \in (0,\infty) \mapsto w(y)^2$ is integrable and the integrand, which is bounded pointwise by this function, converges pointwise to zero, the dominated convergence theorem permits to conclude.}

\rev{The last condition to verify is the one associated to the uniform entropy integral. We propose to use Example~2.11.24 in~\cite{VVV1996}. To this end, we need to verify that the classes $\cF_n$ have finite VC dimensions which can be bounded independently of $n \in \N$. The VC dimension of $\cF_n$ is, by definition, the VC dimension of the class of sets
\[
	\cS_n := \left\{ S_{n,t} \subseteq \mathbb{R}^2 : t \in T \right\},
\]
where $S_{n,t}$ is the sub-graph of $f_{n,t}$ given by
\[
	S_{n,t} := \left\{ (x,h) : f_{n,t}(x) > h \right\}.
\]
We compute that
\[
	S_{n,t} = \lp \left[ 0, \tfrac{k}{n}t \right] \times \left[  0, \sqrt{\tfrac{n}{k}}w(t) \right] \rp \cup \lp [0,\infty) \times (-\infty,0) \rp.
\]
Hence, each element of the class $\cS_n$ is formed from the union of two rectangles where the second rectangle does not depend on $t$. Hence, the VC dimension of $\cS_n$ is bounded by $4$, which corresponds to the VC dimension of the class of all rectangles on $\R^2$. The uniform entropy condition is then verified.}

\rev{Theorem~19.28 in~\cite{VVV1998} applies and the computation of the covariance function of the limiting Gaussian process is an easy exercise.}

\rev{To prove the last assertion, we need, given any $\epsilon, \eta > 0$, to find $N = N(\epsilon,\eta) \in \N$ such that for any $n \geq N$,
\[
	\PP \lc \sup_{x \in [0,M]} \frac{\labs v_{n}(x) + w_{n}(x) \rabs}{x^\delta} > \eta \rc < \epsilon.
\]
To do so, observe that for any $0 < a < 1$,
\begin{multline}
\label{eq:probas_split}
	\PP \lc \sup_{x \in [0,M]} \frac{\labs v_{n}(x) + w_{n}(x) \rabs}{x^\delta} > \eta \rc
	\leq \PP \lc \sup_{x \in [a,M]} \labs v_{n}(x) + w_{n}(x) \rabs > \eta a^\delta \rc \\
	+ \PP \lc \sup_{x \in [0,a]} \frac{|w_{n}(x)|}{x^\delta} > \frac{\eta}{2} \rc + \PP \lc \sup_{x \in [0,a]} \frac{|v_{n}(x)|}{x^\delta} > \frac{\eta}{2} \rc.
\end{multline}
In view of~\eqref{eq:quantile_weak_convergences}, we may find $N_1 \in \N$ such that for $n \geq N_1$, the first term on the right hand side of~\eqref{eq:probas_split} satisfies
\[
	\PP \lc \sup_{x \in [a,M]} \labs v_{n}(x) + w_{n}(x) \rabs > \eta a^\delta \rc < \frac{\epsilon}{3}.
\]
For the second term on the right hand side of~\eqref{eq:probas_split}, note that by the first part of the proposition, there exists $N_2 \in \N$ such that for $n \geq N_2$,
\[
	\PP \lc \sup_{x \in [0,a]} \frac{|w_{n}(x)|}{x^\delta} > \frac{\eta}{2} \rc
	\leq \PP \lc \sup_{x \in [0,a]} \frac{|W_j(x)|}{x^\delta} > \frac{\eta}{2} \rc + \frac{\epsilon}{6}
\]
and we may find $a_1 \in (0,1)$ small enough such that for $0 < a < a_1$,
\[
	\PP \lc \sup_{x \in [0,a]} \frac{|W_j(x)|}{x^\delta} > \frac{\eta}{2} \rc < \frac{\epsilon}{6}.
\]
For the second term on the right~hand side of~\eqref{eq:probas_split}, note that for $x \in [0,(2k)^{-1}]$, we have $v_n(x) = -x\sqrt{k}$. Hence,
\begin{equation*}
	\sup_{x \in [0, (2k)^{-1}]} \frac{|v_{n}(x)|}{x^\delta}
	= \sqrt{k} \sup_{x \in [0, (2k)^{-1}]} x^{1-\delta}
	\le k^{1/2-1+\delta} \to 0, \qquad n \to \infty.
\end{equation*}
Therefore, we may restrict the supremum to $x \in ((2k)^{-1},a]$. On that region, we have
\begin{equation}
	\label{eq:QequivGam}
	\forall x > (2k)^{-1}, 
	\forall u \ge 0, \qquad
	Q_n(\tfrac{k}{n}x) > u
	\iff
	\Gamma_n(u) < \tfrac{k}{n}x.
\end{equation}
By definition of the absolute value and by the union bound, for every $\eta > 0$,
\begin{multline}
	\label{eq:Psupinf}
	\PP \lc
	\sup_{x \in ((2k)^{-1}, a]}
	\frac{|v_{n}(x)|}{x^\delta} > \eta
	\rc
	\\
	\le
	\PP \lc
	\sup_{x \in ((2k)^{-1}, a]}
	\frac{v_{n}(x)}{x^\delta} > \eta
	\rc
	+
	\PP \lc
	\inf_{x \in ((2k)^{-1}, a]}
	\frac{v_{n}(x)}{x^\delta} < -\eta
	\rc.
\end{multline}
We will treat each of the two terms on the right-hand side of~\eqref{eq:Psupinf} separately. Recall $w_n(x) = \sqrt{k} \lp \tfrac{n}{k} \Gamma_n(\tfrac{k}{n}x) - x \rp$ for $x \ge 0$. 
First, by \eqref{eq:QequivGam}, for $x > (2k)^{-1}$,
\begin{align*}
	\frac{v_{n}(x)}{x^\delta} > \eta
	&\iff Q_{n}(\tfrac{k}{n}x) > \tfrac{k}{n} ( x + \eta x^\delta / \sqrt{k} ) \\
	&\iff 
	\Gamma_n \lp \tfrac{k}{n} (x + \eta x^\delta/\sqrt{k}) \rp
	< \tfrac{k}{n} x \\
	&\iff
	\sqrt{k} \lacc \tfrac{n}{k} \Gamma_n \lp \tfrac{k}{n} (x + \eta x^\delta/\sqrt{k}) \rp - (x + \eta x^\delta/\sqrt{k}) \racc
	< - \eta x^\delta \\
	&\iff - \frac{w_n(x + \eta x^\delta/\sqrt{k})}{x^\delta} > \eta.
\end{align*}
Therefore,
\[
\PP \lc
\sup_{x \in ((2k)^{-1}, a]} \frac{v_n(x)}{x^\delta} > \eta
\rc
=
\PP \lc
\sup_{x \in ((2k)^{-1}, a]} \frac{-w_n(x+\eta x^\delta/\sqrt{k})}{x^\delta} > \eta
\rc.
\]
Let $\gamma = \delta / (\delta + 1/2)$ and note that $\delta < \gamma < 1/2$ since $0 < \delta < 1/2$. We have
\[
\frac{w_n(x+\eta x^\delta/\sqrt{k})}{x^\delta}
= \frac{w_n(x+\eta x^\delta/\sqrt{k})}{(x+\eta x^\delta/\sqrt{k})^\gamma} \cdot \frac{(x+\eta x^\delta/\sqrt{k})^\gamma}{x^\delta}.
\]
Since $\delta / \gamma = \delta + 1/2$, the second factor on the right-hand side satisfies
\begin{align*}
	\frac{(x+\eta x^\delta/\sqrt{k})^\gamma}{x^\delta}
	&= \lp x \cdot x^{-\delta/\gamma} + \eta x^\delta \cdot x^{-\delta/\gamma} / \sqrt{k} \rp^{\gamma} \\
	&= \lp x^{1 - \delta - 1/2} + \eta x^{-1/2} / \sqrt{k} \rp^\gamma
	\le 2, \qquad \forall x \in ((2k)^{-1}, 1].
\end{align*}
We find
\begin{align*}
	\PP \lc
	\sup_{x \in ((2k)^{-1}, a]} 
	\frac{v_n(x)}{x^\delta} 
	> \eta
	\rc
	&\le 
	\PP \lc
	\sup_{x \in ((2k)^{-1}, a]} 
	\frac{w_n(x + \eta x^\delta / \sqrt{k})}{(x + \eta x^\delta / \sqrt{k})^\gamma}
	> \eta / 2
	\rc \\
	&=
	\PP \lc
	\sup_{z \in [0, a + \eta a^\delta / \sqrt{k}]}
	\frac{w_n(z)}{z^\gamma}
	> \eta / 2
	\rc \\
	&\le
	\PP \lc
	\sup_{z \in [0, 2a]}
	\frac{w_n(z)}{z^\gamma}
	> \eta / 2
	\rc
\end{align*}
for $k$ sufficiently large.
As $0 < \gamma < 1/2$, it follows from previous considerations that we may find $N_3 \in \N$ and $a_2 \in (0,1)$ such that for $n \geq \N_3$ and $0 < a < a_2$,
\[
	\PP \lc
	\sup_{z \in [0, 2a]}
	\frac{w_n(z)}{z^\gamma}
	> \eta / 2
	\rc
	< \frac{\epsilon}{6}.
\]
Let us now consider the second term in~\eqref{eq:Psupinf}. By \eqref{eq:QequivGam}, for $x > (2k)^{-1}$ such that also $x \ge \eta x^\delta \sqrt{k}$,
\begin{align*}
	\frac{v_{n}(x)}{x^\delta} \le -\eta
	&\iff Q_{n}(\tfrac{k}{n}x) \le \tfrac{k}{n} ( x - \eta x^\delta / \sqrt{k} ) \\
	&\iff \Gamma_n \lp \tfrac{k}{n} (x - \eta x^\delta/\sqrt{k}) \rp \ge \tfrac{k}{n} x \\
	&\iff
	\sqrt{k} \lacc \tfrac{n}{k} \Gamma_n \lp \tfrac{k}{n} (x - \eta x^\delta/\sqrt{k}) \rp - (x - \eta x^\delta/\sqrt{k}) \racc
	\ge \eta x^\delta \\
	&\iff \frac{w_n(x - \eta x^\delta / \sqrt{k})}{x^\delta} \ge \eta.	
\end{align*}
If $x < \eta x^\delta \sqrt{k}$, then the inequality $v_n(x)/x^\delta \le -\eta$ is impossible, since $Q_n \ge 0$ and thus $v_n(x) \ge -x\sqrt{k}$. It follows that
\begin{align*}
	\PP \lc
	\inf_{x \in ((2k)^{-1}, a]} \frac{v_{n}(x)}{x^\delta} < -\eta
	\rc
	&\le
	\PP \lc
	\inf_{x \in ((2k)^{-1}, a]} \frac{v_{n}(x)}{x^\delta} \le -\eta
	\rc \\
	&=
	\PP \lc
	\sup_{\substack{x \in ((2k)^{-1}, a] \\ x \ge \eta x^\delta \sqrt{k}}} \frac{w_n(x - \eta x^\delta / \sqrt{k})}{x^\delta} \ge \eta
	\rc.
\end{align*}
Since $x - \eta x^\delta / \sqrt{k} \le x$, the weight $1/x^\delta$ can only increase if we replace it by $1/(x - \eta x^\delta / \sqrt{k})^\delta$. But then
\[
\sup_{\substack{x \in ((2k)^{-1}, a] \\ x \ge \eta x^\delta \sqrt{k}}} \frac{w_n(x - \eta x^\delta / \sqrt{k})}{x^\delta}
\le
\sup_{z \in [0,a]} \frac{w_n(z)}{z^\delta}.
\]
Consequently, it follows again from our results on the weighted tail empirical process that we may find $N_4 \in \N$ and $a_3 \in (0,1)$ such that for $n \geq \N_4$ and $0 < a < a_3$,
\[
	\PP \lc
	\inf_{x \in ((2k)^{-1}, a]}
	\frac{v_{n}(x)}{x^\delta} < -\eta
	\rc
	< \frac{\epsilon}{6}.
\]
Combining everything, we have for $n \geq \max\{N_1,\ldots,N_4\}$ and $0 < a < \min\{a_1,a_2,a_3\}$ in~\eqref{eq:probas_split},
\[
	\PP \lc \sup_{x \in [0,M]} \frac{\labs v_{n}(x) + w_{n}(x) \rabs}{x^\delta} > \eta \rc < \epsilon.
\]
This concludes the proof.
}

\section{Proof of Corollary~\ref{cor:quantile}}
\label{app:proofCor2}

Let $[v,w] \subset (0,1)$ and consider the map
\[
	\psi: F \in \mathbb{D}_\psi \subset \ell^\infty([0,\pi/2]) \mapsto \psi(F) := F^{-1} \in \ell^\infty\bigl([v,w]\bigr)
\]
where $\mathbb{D}_\psi$ is the set of cumulative distribution functions supported on $[0,\pi/2]$, vanishing at $0$, which are continuously differentiable on $(0,\pi/2)$ with strictly positive derivative. By point (i) of~\cite[Lemma~21.4]{VVV1998}, $\psi$ is Hadamard differentiable tangentially to $C([0,\pi/2])$, the set of continuous functions on $[0,\pi/2]$. Furthermore, its derivative at $F \in \mathbb{D}_\psi$ in the direction $\gamma \in C([0,\pi/2])$ is
\[
	\psi'_F(\gamma) = - \frac{\gamma}{f} \circ F^{-1},
\]
where $f$ is the derivative of $F$. Consequently, the map $\psi$ is Hadamard differentiable at $Q_p \in \mathbb{D}_\psi$, and its derivative, $\psi'_{Q_p}$, is defined and continuous on the whole space $\ell^\infty([0,\pi/2])$. Therefore, it follows from~\cite[Theorem~20.8]{VVV1998} that
\[
	\sup_{u \in [v,w]} \labs \sqrt{k} \lp \hQ_p^{-1}(u) - Q_p^{-1}(u) \rp - \psi'_{Q_p} \lp  \sqrt{k} \bigl( \hQ_p - Q_p \bigr) \rp(u) \rabs = \oh_{\PP}(1),
\]
as $n \ra \infty$. \rev{Corollary~\ref{cor:eapm}, in combination with continuity of the derivative $\psi'_{Q_p}$ implies
\begin{multline*}
	\sup_{u \in [v,w]} \Biggl| \psi'_{Q_p} \lp  \sqrt{k} \bigl( \hQ_p - Q_p \bigr) \rp(u) \\ 
	- \psi'_{Q_p} \lp \frac{E_{n,p}(\cdot) \Phi_p(\tfrac{\pi}{2}) - \Phi_p(\cdot) E_{n,p}(\tfrac{\pi}{2})}{\Phi_p(\tfrac{\pi}{2})^2} \rp (u) \Biggr| = \oh_{\PP}(1).
\end{multline*}
Evaluating the derivative yields, for any $u \in [v,w]$,
\begin{multline*}
	\psi'_{Q_p} \lp \frac{E_{n,p}(\cdot) \Phi_p(\tfrac{\pi}{2}) - \Phi_p(\cdot) E_{n,p}(\tfrac{\pi}{2})}{\Phi_p(\tfrac{\pi}{2})^2} \rp (u) \\
	= \frac{\Phi_p(Q_p^{-1}(u)) E_{n,p}(\tfrac{\pi}{2}) - E_{n,p}(Q_p^{-1}(u)) \Phi_p(\tfrac{\pi}{2})}{\varphi_p(Q_p^{-1}(u)) \Phi_p(\tfrac{\pi}{2})}.
\end{multline*}
Combining the last three equations concludes the proof.}

\section{Reduction to $\theta \in [0,\pi/4]$}
\label{app:proofSym}

We argue that, in the proof of Theorem~\ref{thm:expansionEAM}, we can focus on the region $\theta \in [0,\pi/4]$ by exploiting symmetry.

Let $S : (x,y) \in \R^2 \mapsto (y,x) \in \R^2$ be the symmetry map. By extension, we also define the set-valued map $S(A) := \{S(x,y) : (x,y) \in A\}$ for $A \subseteq \R^2$. Clearly, the measures $S \# P_n$ and $S \# P$ underlying the statement of the theorem satisfy Assumptions~\ref{ass:smoothness} and~\ref{ass:bias} with respect to the measure $\Lambda_S := S \# \Lambda$, which has density $\lambda_S := \lambda \circ S$. The smoothness assumption is trivial. \mdf{For the bias assumption, letting $c_S = c \circ S$ denote the density of $S \# P$, it suffices to note that $S(\mathcal{L}_T) = \mathcal{L}_T$ for any $T \geq 1$ and thus
$$
	\iint_{\mathcal{L}_T} \labs t c_S(tu_1,tu_2) - \lambda_S(u_1,u_2) \rabs \d u_1 \d u_2
	= \mathcal{D}_T(t),
$$
for any $t>0$.
We may thus apply our result for $S \# P_n$ and $S \# P$ for $\theta \in [0,\pi/4]$ once it has been proved for $P_n$ and $P$.}

Now consider $\theta \in [\pi/4,\pi/2]$. Then, up to sets of Hausdorff dimension $1$, \mdf{sets which} are asymptotically negligible for our purposes, we have
\begin{equation}
\label{eq:decomp_set}
	C_{p,\theta} = C_{p,\pi/4} \cup \lp S(C_{p,\pi/4}) \setminus S(C_{p,\pi/2 - \theta}) \rp. 
\end{equation}
Consequently, letting $\hPhi_p^S$ denote the empirical angular measure associated with $S \# P_n$ and $\Phi_p^S$ the angular measure associated with $S \# P$, we have for such $\theta \in [\pi/4,\pi/2]$ that asymptotically, by continuity of the limiting process,
\begin{multline*}
	\sqrt{k} \lp \hPhi_p(\theta) - \Phi_p(\theta) \rp = \sqrt{k} \lp \hPhi_p(\pi/4) - \Phi_p(\pi/4) \rp  + \sqrt{k} \lp \hPhi_p^S(\pi/4) - \Phi_p^S(\pi/4) \rp \\
	- \sqrt{k} \lp \hPhi_p^S(\pi/2 - \theta) - \Phi_p^S(\pi/2 - \theta) \rp + \oh_{\PP}(1),
\end{multline*}
where the $\oh_{\PP}(1)$ term is uniform in $\theta \in [\pi/4,\pi/2]$.
Assuming that the theorem has been shown to hold for $\theta \in [0,\pi/4]$, if $E_{n,p}^S(\theta)$ denotes the asymptotic expansion obtained on the basis of the measures $S \# P_n$ and $S \# P$, we have for $\theta \in [\pi/4,\pi/2]$,
\[
	\sqrt{k} \lp \hPhi_p(\theta) - \Phi_p(\theta) \rp 
	= E_{n,p}(\pi/4) + E_{n,p}^S(\pi/4) - E_{n,p}^S(\pi/2 - \theta) + \oh_{\PP}(1),
\]
where the $\oh_{\PP}(1)$ term is uniform in $\theta \in [\pi/4,\pi/2]$. 
\mdf{It is now sufficient to show that
\begin{equation}
\label{eq:Enpsymmetry}
	E_{n,p}(\pi/4) + E_{n,p}^S(\pi/4) - E_{n,p}^S(\pi/2 - \theta) = E_{n,p}(\theta),
	\qquad \theta \in [\pi/4,\pi/2].
\end{equation}
To do so, we expand $E_{n,p}$ and $E_{n,p}^S$ into three terms as in Theorem~\ref{thm:expansionEAM}.
The expansions of $E_{n,p}(\pi/4)$ and $E_{n,p}(\theta)$ are provided in the theorem. We compute the expansions of $E_{n,p}^S(\pi/4)$ and $E_{n,p}^S(\pi/2-\theta)$.}

For $E_{n,p}^S(\pi/4)$, the first term in the expansion is trivially given by
\[
	\sqrt{k} \lp \tfrac{n}{k} P_n \lp \tfrac{k}{n} S(C_{p,\pi/4}) \rp - \tfrac{n}{k} P\lp \tfrac{k}{n} S(C_{p,\pi/4}) \rp \rp.
\]
\mdf{For the second term in the expansion, the integral over $x < x_p(\pi/4)$ becomes}
\[
	\int_0^{2^{1/p}} \lambda_S(x,x) \{w_{2n}(x) - w_{1n}(x)\} \d x = - \int_0^{2^{1/p}} \lambda(x,x) \{w_{1n}(x) - w_{2n}(x) \} \d x,
\]
since $\tan(\pi/4) = \cot(\pi/4) = 1$ and the measure $S_{\#} P_n$ has first marginal tail empirical process $w_{2n}$ and second marginal tail empirical process $w_{1n}$. 
We see that it equals the opposite of \mdf{the integral over $x < x_p(\pi/4)$ in the expansion of $E_{n,p}(\pi/4)$} so that they will cancel out. \mdf{The third term in the expansion is an integral over $x \geq x_p(\pi/4)$, for which we only consider the more complicated case $p<\infty$.} It equals
\begin{multline*}
	\int_{2^{1/p}}^\infty \lambda_S(x,y_p(x)) \lacc w_{2n}(x) y_p'(x) - w_{1n}(y_p(x)) \racc \d x \\
	= \int_1^{2^{1/p}} \lambda(z,y_p(z)) \lacc w_{1n}(z) y_p'(z) - w_{2n}(y_p(z)) \racc \d z,
\end{multline*}
where the second line follows from the change of variable $x = y_p(z)$ which is an involution and satisfies $y_p'(y_p(z)) y_p'(z) = 1$.

For $E_{n,p}^S(\pi/2 - \theta)$, the reasoning is similar. The first term in the expansion is
\[
	\sqrt{k} \lp \tfrac{n}{k} P_n \lp \tfrac{k}{n} S \big( C_{p,\pi/2 - \theta} \big) \rp - \tfrac{n}{k} P\lp \tfrac{k}{n} S \big( C_{p,\pi/2 - \theta} \big) \rp \rp.
\]
\mdf{Since $\tan(\pi/2 - \theta) = \cot(\theta)$, the second term in the expansion is}
\begin{multline*}
	\int_0^{x_p(\pi/2 - \theta)} \lambda_S(x,x\tan(\pi/2 - \theta)) \{w_{2n}(x) \tan(\pi/2 - \theta) - w_{1n}(x \tan(\pi/2 - \theta))\} \d x \\
	 = - \int_0^{x_p(\theta)} \lambda(y,y\tan\theta) \lacc w_{1n}(y) \tan\theta - w_{2n}(y\tan\theta) \racc \d y.
\end{multline*}
The third term is an integral over $x \ge x_p(\pi/2-\theta)$. Again assuming $p < \infty$, using the same change of variable as before and the fact that $y_p(x_p(\pi/2 - \theta)) = x_p(\theta)$, we get
\[
\int_1^{x_p(\theta)} \lambda(z,y_p(z)) \lacc w_{1n}(z) y_p'(z) - w_{2n}(y_p(z)) \racc \d z.
\]

Combining everything, we find \eqref{eq:Enpsymmetry}. Consequently, once the asymptotic expansion stated in the theorem has been shown to hold on the region $\theta \in [0,\pi/4]$, it holds on $\theta \in [0, \pi/2]$.

\section{Expansion of the second part of the stochastic term}
\label{app:proofi2}

\revtwo{Let us now consider the case $i=2$ in~\eqref{eq:expansion_stochastic_term_separately} in the proof of Theorem~\ref{thm:expansionEAM} in the paper. Similarly to the case $i=1$, it will be useful to note that for any $\theta \in [0,\pi/4]$,
\[
	\hat{C}_{p,\theta,2} = \lacc (x,y): x \geq x_p(\theta), 0 \leq y \leq \lp x \tan\theta + \frac{z_{n,\theta(x)}}{\sqrt{k}} \rp \wedge \lp y_p(x) + \frac{s_{n,p}(x)}{\sqrt{k}} \rp \racc,
\]
with $z_{n,\theta}$ and $s_{n,p}$ the processes defined in~\eqref{eq:def_z_n} and~\eqref{eq:def_s_n} respectively. For each $m \in \{0,1,2,\ldots, 1/\Delta - 1\}$, we let
\begin{align*}
	S_{m,\Delta,\theta}^+ &:= \sup_{x \in J_\Delta(m)} \frac{s_{n,p}(x) \wedge \lp z_{n,\theta}(x) + \sqrt{k}(x\tan\theta - y_p(x)) \rp }{y_p(x)}, \\
	S_{m,\Delta,\theta}^- &:= \inf_{x \in J_\Delta(m)} \frac{s_{n,p}(x) \wedge \lp z_{n,\theta}(x) + \sqrt{k}(x\tan\theta - y_p(x)) \rp }{y_p(x)},
\end{align*}
\rev{where the intervals $J_\Delta(m)$ were defined in~\eqref{eq:J_intervals} in the paper}. These intervals cover $[2^{1/p}, \infty]$, on which $x\tan\theta \geq x_p(\theta)$ for any $\theta \in [0,\pi/4]$. It follows that on an event whose probability tends to one, $S_{m,\Delta,\theta}^\pm$ is equal to the same expression as in its definition but with the numerator simplified to $s_{n,p}(x)$. In the following, we work with these simplified definitions.
Note that, by~\eqref{eq:w_jn_poids} and~\eqref{eq:quantile_weak_convergences_weighted}, the mean value theorem and the extended continuous mapping theorem~\cite[Theorem~1.11.1]{VVV1996}, the weak convergence 
\begin{equation}
\label{eq:s_n_poids}
	\lacc s_{n,p}(x): x \in [2^{1/p},\infty) \racc 
	\wc
	\lacc W_1(x)y_p'(x) - W_2(y_p(x)): x \in [2^{1/p},\infty) \racc
\end{equation}
holds in $\ell^\infty([2^{1/p},\infty))$. In particular, since $y_p(x) \geq 1$ when $x \geq 2^{1/p}$, we have
\begin{equation}
\label{eq:s_n_boundedness_stochastic}
	\sup_{x \in [2^{1/p}, \infty]} \frac{|s_{n,p}(x)|}{y_p(x)} = \Oh_{\PP}(1).
\end{equation}
Defining
\[
	N_{\Delta,\theta}^\pm := \bigcup_{m=0}^{\Delta - 1} \lacc (x,y) : x \geq x_p(\theta), x \in J_\Delta(m), 0 \leq y \leq y_p(x) \lp 1 + \frac{S_{m,\Delta,\theta}^\pm}{\sqrt{k}} \rp \racc,
\]
we see that for any $\theta \in [0,\pi/4]$ we have $N_{\Delta,\theta}^- \subseteq \hat{C}_{\theta,p,2} \subseteq N_{\Delta,\theta}^+$ and thus
\begin{equation}
\label{eq:bounding_stochastic_2}
	V_{n,\Delta,2}^-(\theta) - R_{n,\Delta,2}(\theta) \leq V_{n,p,2}(\theta) \leq V_{n,\Delta,2}^+(\theta) + R_{n,\Delta,2}(\theta),	
\end{equation}
where
\begin{align}
	V_{n,\Delta,2}^\pm(\theta) &:= \rev{\Wnk(N_{\Delta,\theta}^\pm)}, \label{eq:bounding_processes_2} \\
	R_{n,\Delta,2}(\theta) &:= \sqrt{k} \tfrac{n}{k} P \lp \tfrac{k}{n} (N_{\Delta,\theta}^+ \setminus N_{\Delta,\theta}^-) \rp. \label{eq:rest_2}
\end{align}}

\revtwo{Due to Assumption~\ref{ass:bias} and the behavior of the marginal tail empirical and quantile processes,
\[
	\sup_{\theta \in [0,\pi/4]} \labs R_{n,\Delta,2}(\theta) - \sqrt{k} \Lambda(N_{\Delta,\theta}^+ \setminus N_{\Delta,\theta}^-) \rabs = \oh_{\PP}(1).
\]
Computations in~\cite[Paragraph~A.1]{einmahl2009maximum} show that, with probability tending to one,
\[
	\sup_{\theta \in [0,\pi/4]} \sqrt{k} \Lambda(N_{\Delta,\theta}^+ \setminus N_{\Delta,\theta}^-) 
	\leq 3 \sup_{\theta \in [0,\pi/4]} \max_{m = 0,\ldots,\Delta - 1} (S_{m,\Delta,\theta}^+ - S_{m,\Delta,\theta}^-).
\]
In view of~\eqref{eq:s_n_poids}, we have, for any $\epsilon>0$,
\[
	\lim_{\Delta \ra 0} \limsup_{n \ra \infty} \PP^* \lc \sup_{\theta \in [0,\pi/4]} \max_{m = 0,\ldots,\Delta - 1} (S_{m,\Delta,\theta}^+ - S_{m,\Delta,\theta}^-) > \epsilon \rc = 0,
\]
which implies~\eqref{eq:banane} with $R_{n,\Delta,1}$ replaced by $R_{n,\Delta,2}$.}

\revtwo{By a similar argument as for $i=1$, we have
\[
	\sup_{\theta \in [0, \pi/4]} \rho(N_{\Delta,\theta}^\pm, C_{p,\theta,2}) \leq \frac{3}{\sqrt{k}} \sup_{\theta \in [0, \pi/4]} \max_{m = 0,\ldots,\Delta - 1} |S_{m,\Delta,\theta}^\pm| = \oh_{\PP}(1),
\]
and thus~\eqref{eq:expansion_stochastic_term_separately} holds for $i=2$.}

\end{document}